\theoremstyle{plain}
\newtheorem{thm}{Theorem}
\newtheorem{prop}[thm]{Proposition}
\newtheorem{lem}[thm]{Lemma}
\newtheorem{cor}[thm]{Corollary}
\theoremstyle{remark}
\theoremstyle{definition}
\newtheorem{defn}{Definition}
\def \kc 	#1{\mathrm{KC}\left(#1\right)}
\def \gp 	#1{G\left(#1\right)}
\def \P 	{\mathrm{GP}}
\def \O 	#1{E_{O}\left(#1\right)}
\def \I 	#1{E_{I}\left(#1\right)}
\def \S 	#1{E_{S}\left(#1\right)}
\def \XYX	{\mathcal{H}}
\def \lcm   {\mathrm{lcm}}
\def \gcd   {\mathrm{gcd}}
\begin{document}

\title{Characterization of generalized Petersen graphs that are Kronecker covers\footnote{Dedicated to Mark Watkins on the Occasion of his 80th Birthday.}}

\author{Matja\v z Krnc\thanks{ \texttt{matjaz.krnc@upr.si} }\,}


\author{Toma\v z Pisanski\thanks{\texttt{tomaz.pisanski@fmf.uni-lj.si}}}
\affil{FAMNIT, University of Primorska, Slovenia}

\maketitle

\begin{abstract}
The family of generalized Petersen graphs $\gp{n,k}$, introduced
by \citet{coxeter1950self} and named by Mark~\citeauthor{watkins1969theorem}~(\citeyear{watkins1969theorem}),
is a family of cubic graphs formed by connecting the vertices of a
regular polygon to the corresponding vertices of a star polygon. The
Kronecker cover $\kc G$ of a simple undirected graph $G$ is a a special
type of bipartite covering graph of $G$, isomorphic to the direct (tensor)
product of $G$ and $K_{2}$.
We characterize all the members of generalized Petersen graphs that
are Kronecker covers, and describe the structure
of their respective quotients.
We observe that some of such quotients are again generalized
Petersen graphs, and describe all such pairs.
\end{abstract}

\noindent
\begin{keywords}
Generalized Petersen graphs, Kronecker cover\\
\textbf{MSC:} 57M10, 05C10, 05C25
\end{keywords}

\section{Introduction }

The \emph{generalized Petersen graphs}, introduced by \citet{coxeter1950self}
and named by \citet{watkins1969theorem}, form a very interesting
family of trivalent graphs that can
be described by only two integer parameters. They include
Hamiltonian and non-Hamiltonian graphs, bipartite and
non-bipartite graphs, vertex-transitive and
non-vertex-transitive graphs, Cayley and non-Cayley graphs,
arc-transitive graphs and non-arc transitive graphs, graphs
of girth $3,4,5,6,7$ or $8$.
Their generalization to $I$-graphs does not
introduce any new vertex-transitive graphs but
it contains also non-connected graphs and has in special
cases unexpected symmetries.

Following the notation of \citet{watkins1969theorem}, for a given
integers $n$ and $k<\frac{n}{2}$, we can define a generalized Petersen graph $\gp{n,k}$
as a graph on vertex-set $\left\{ u_{0},\dots,u_{n-1},v_{0},\dots,v_{n-1}\right\} $.
The edge-set may be
 naturally partitioned into three equal parts (note that all subscripts are assumed modulo $n$):
 the edges $\O{n,k}=\left\{u_{i}u_{i+1}\right\}_{i=0}^{n-1}$ from the \emph{outer rim}, inducing a cycle of length $n$;
the edges $\I{n,k}=\left\{ v_iv_{i+k}\right\}_{i=0}^{n-1}$ from the \emph{inner rims}, inducing $\gcd(n,k)$ cycles of length $\frac{n}{\gcd(n,k)}$;
and the edges $\S{n,k}=\left\{u_{i}v_{i}\right\}_{i=0}^{n-1}$, also called \emph{spokes}, that induce a perfect matching in $\gp{n,k}$.
Hence the edge-set may be defined as
$E\left(\gp{n,k}\right)=\O{n,k} \cup \I{n,k} \cup \S{n,k}$.




Various aspects of the structure of the mentioned family has been
observed. Examples include identifying generalized Petersen graphs
that are Hamiltonian \citep{alspach1983classification} or Cayley
\citep{saravzin1997note,nedela1995generalized}, or finding their
automorphism group \citep{steimle2009isomorphism,petkovvsek2009enumeration,horvat2012isomorphism}.
Also, a related generalization to $I$-graphs has been introduced
in the Foster census
\citep{bouwer1988foster}, and further studied
by \citet{boben2005graphs}.

Theory of covering graphs became one of the most important
and successful tools of algebraic graph theory.
It is a discrete analog of the well known theory of covering spaces
in algebraic topology.
In general, covers depend on the values called voltages assigned to the edges of the graphs.
Only in some cases the covering is determined by the graph itself.
One of such cases is the recently studied
\emph{clone cover} \citep{malnivc2014clone}.
The other, more widely known case is the Kronecker cover.

The \emph{Kronecker cover} $\kc G$ (also called bipartite or canonical
double cover) of a simple undirected graph $G$ is a bipartite covering
graph with twice as many vertices as $G$. Formally, $\kc G$ is defined
as a
tensor product $G\times K_2$, i.e. a
graph on a vertex-set
$V\left(\kc G\right)=\left\{ v',v''\right\} _{v\in V(G)}$,
and an edge-set
$E\left(\kc G\right)=\left\{ u'v'',u''v'\right\} _{uv\in E(G)}$.
Some recent work on Kronecker covers includes \citet{gevay2012kronecker}
and \citet{imrich2008multiple}.

In this paper, we study the family of generalized Petersen graphs in
conjunction with the Kronecker cover operation.
Namely, in the next section we state our main theorem
characterizing all the members of
generalized Petersen graphs that
are Kronecker covers, and describing the structure of
their corresponding quotient graphs.
In Section~\ref{sec:id_of_involut} we focus on
the necessary and sufficient conditions for a generalized Petersen
graph to be a Kronecker cover while in Section~\ref{sec:quotients} we
complement the existence results with the description
of the structure of the  corresponding quotient graphs.
We conclude the paper with some remarks and directions for
a possible future research.

\section{Main result}

In order to state the main result we need to introduce the graph and two 2-parametric families of cubic, connected graphs.

\begin{figure}
\centering
\includegraphics[scale=0.55]{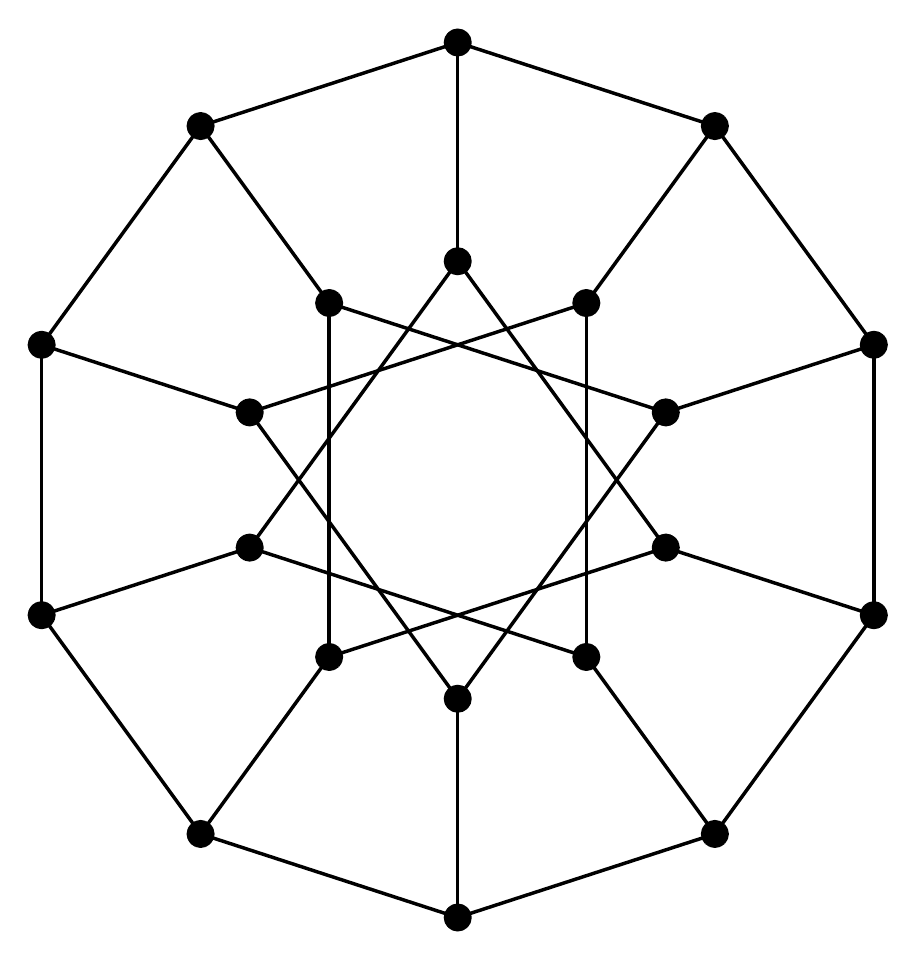}\quad\quad
\includegraphics[scale=0.3]{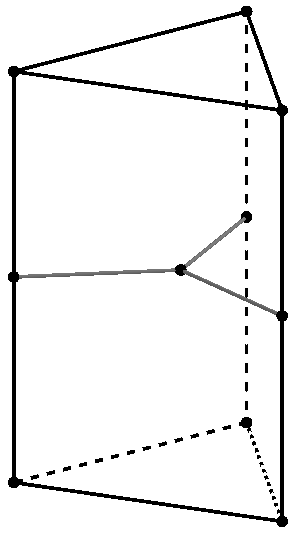}\quad\quad
\includegraphics[scale=0.55]{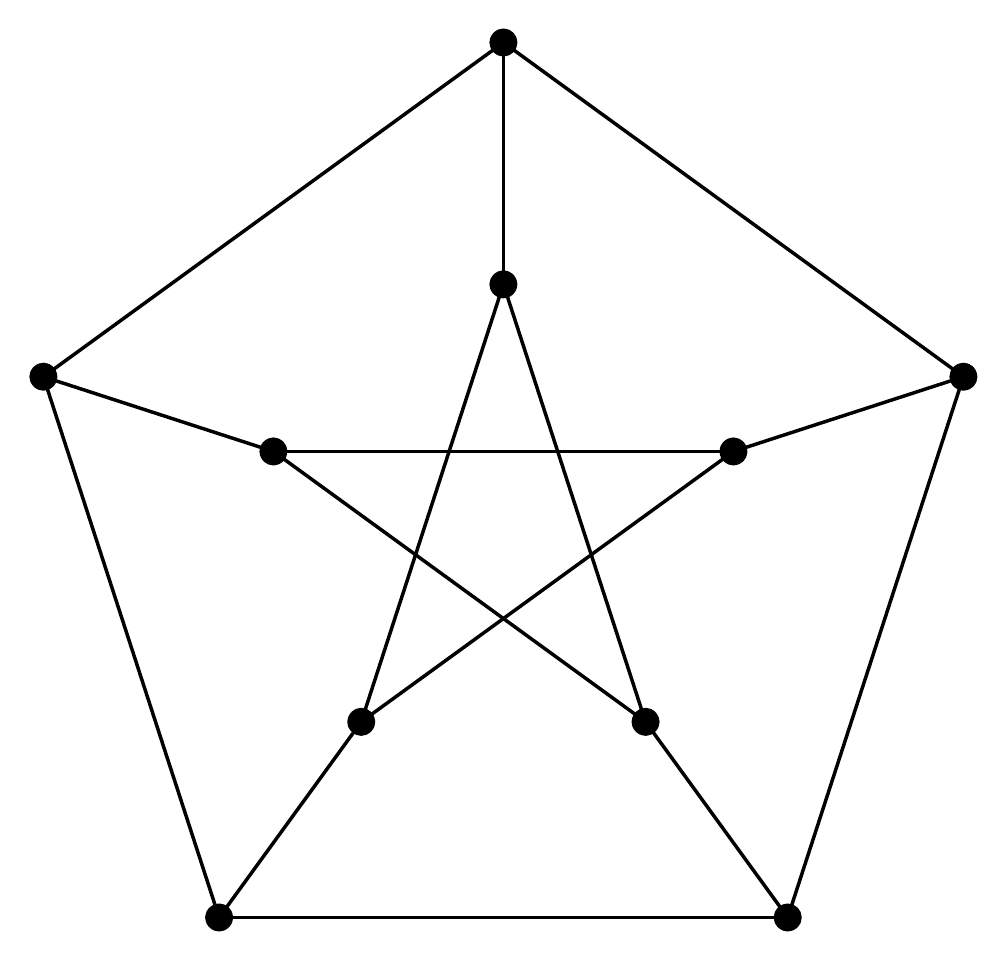}
\caption{The Desargues graph and its both quotients; $\XYX$ and the Petersen graph. \label{fig:gamma}}
\end{figure}

Let $\XYX$  be a graph defined by the following procedure: take the Cartesian product $K_{3}\square P_{3}$,
remove the edges of the middle triangle, add a new vertex in the middle
and connect it to all three $2$-vertices.
Note that the graph $\XYX$ is mentioned in the paper \cite{imrich2008multiple} where it is depicted in Figure 1.

The Desargues graph $\gp{10,3}\simeq \kc{\XYX}$ may be similarly
defined by first taking $K_{6}\square P_{3}$,
removing the edges of the middle hexagon, adding two new vertices in the middle and
alternately connecting them to the consecutuve vertices of the middle hexagon. The mentioned construction of $\XYX$ and the standard drawing of Desargues graph are depicted on Figure~\ref{fig:gamma}.

To describe the quotients of generalized Petersen graphs, we use the LCF notation, named by developers Lederberg, Coxeter and Frucht, for the representation of cubic hamiltonian graphs (for extended
description see \citep{pisanski2012configurations}).

\begin{quote}
In a Hamiltonian cubic graph, the vertices can be arranged in a cycle, which accounts for two
edges per vertex. The third edge from each vertex can then be described by how many positions
clockwise (positive) or counter-clockwise (negative) it leads.
The basic form of the LCF notation
is just the sequence $[a_0,a_1,\dots,a_{n-1}]$
of numbers of positions, starting from an
arbitrarily chosen vertex and written in square brackets.
\end{quote}

To state our results, we only use a special type of such LCF-representable
graphs, namely $C^+(n,k)$ and $C^-(n,k)$, which we define below.

\begin{defn}\label{def:cycle+matching}
Assuming all numbers are modulo $n$, define graphs
$$C^+(n,k)=\left[\frac{n}{2}, \frac{n}{2} + (k-1), \frac{n}{2} + 2(k-1), \dots, \frac{n}{2} + (n-1)(k-1)\right],$$ and similarly

$$C^-(n,k)=\left[\frac{n}{2}, \frac{n}{2} - (k+1), \frac{n}{2} - 2(k+1), \dots, \frac{n}{2} - (n-1)(k+1)\right].$$
\end{defn}

In \cite{imrich2008multiple} it was proven that $\gp{10,3}$ is Kronecker cover of two
non-isomorphic graphs. Here we prove among other things that this is the only
generalized Petersen graph that is
a multiple Kronecker cover. Every other generalized Petersen graph is either a Kronecker cover of a single
graph or it is not a Kronecker cover at all. More precisely;

\begin{thm}\label{thm1} Among the members of the family of
generalized Petersen graphs, $\gp{10,3}$ is the only graph that is the Kronecker
cover of two non-isomorphic graphs, the Petersen graph and the graph $\XYX$.
For any other $G\simeq \gp{n,k}$,
the following holds:
\begin{enumerate}[label=$\alph*)$]
\item \label{thm1:non-vt}If $n\equiv2\pmod4$ and $k$ is odd, $G$ is a Kronecker
cover. In particular
	\begin{enumerate}[label=$a_\arabic*)$]
	    \item if $4k<n$, the corresponding quotient graph is $\gp{\frac{n}{2},k}$, and \label{thm1:a1}
	    \item if $n<4k<2n$ the quotient graph is $\gp{\frac{n}{2},\frac{n}{2}-k}$.
	\end{enumerate}
\item \label{thm1:vt}If $n\equiv0\pmod4$ and $k$ is odd, $G$ is a Kronecker cover if and
only if $n\mid \frac{k^{2}-1}{2}$ and  $k< \frac{n}{2}$,
or if $(n,k)=(8,3).$
Moreover,
	\begin{enumerate}[label=$b_\arabic*)$]
	\item if $k=4t+1$ the corresponding quotient is $C^{+}(n,k)$ while \label{thm1:b1}
	\item if $k=4t+3$ the quotient is $C^{-}(n,k)$.\label{thm1:b2}
	\end{enumerate}
\item Any other generalized Petersen graph is not a Kronecker cover.
\end{enumerate}
\end{thm}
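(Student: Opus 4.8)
The plan is to convert the question \emph{``which $\gp{n,k}$ are Kronecker covers?''} into a question about a distinguished kind of automorphism. The tool is the standard characterization: a connected graph $G$ satisfies $G\simeq\kc H$ for some simple graph $H$ if and only if $G$ is bipartite and $\mathrm{Aut}(G)$ contains a fixed-point-free involution $\sigma$ that interchanges the two colour classes and has no invariant edge (no edge $\{x,\sigma(x)\}$); the quotient is then $H=G/\sigma$, and two such involutions that are conjugate in $\mathrm{Aut}(G)$ yield isomorphic quotients. (The colour-swapping condition is what forces the two lifts of each edge of $H$ to be the crossed pair, i.e.\ the genuine Kronecker structure, and the no-invariant-edge condition forbids loops in $H$.) Since $\gp{n,k}$ is bipartite exactly when $n$ is even and $k$ is odd, every pair with $n$ odd or $k$ even is immediately ruled out, which already settles a large part of statement $c)$.

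For the surviving pairs I invoke the classical determination of $\mathrm{Aut}(\gp{n,k})$ by Frucht, Graver and Watkins: apart from seven exceptional pairs (among them $(8,3)$ and $(10,3)$) the group is the dihedral group $D_{n}$ of rim-preserving symmetries. Inside $D_{n}$ the only rotation of order two is the half-turn $\tau\colon i\mapsto i+\tfrac n2$ on both rims, while every reflection either fixes a vertex or fixes an edge and is therefore excluded. The half-turn is always a fixed-point-free involution and, since $k\neq\tfrac n2$, has no invariant edge; it interchanges the colour classes precisely when $\tfrac n2$ is odd, that is when $n\equiv2\pmod4$. This establishes existence when $n\equiv2\pmod4$ and $k$ is odd. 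Collapsing the orbits $\{u_i,u_{i+n/2}\}$ and $\{v_i,v_{i+n/2}\}$ and reducing indices modulo $\tfrac n2$ turns the outer rim into a cycle of length $\tfrac n2$, preserves the spokes, and sends inner edges to $\bar v_i\bar v_{i+k}$; using $\gp{m,j}\simeq\gp{m,m-j}$ the quotient is $\gp{n/2,k}$ when $4k<n$ and $\gp{n/2,\frac n2-k}$ when $n<4k<2n$, as claimed.

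When $n\equiv0\pmod4$ the half-turn is colour-preserving and hence useless, so a suitable involution can only be a \emph{rim-swapping} automorphism, and these exist only when $\gp{n,k}$ admits the rim exchange, which is governed by $k^{2}\equiv\pm1\pmod n$. I will write down the candidate $\sigma\colon u_i\mapsto v_{ki+b},\ v_i\mapsto u_{k'i+b}$ with $kk'\equiv1\pmod n$ and impose, in turn, that $\sigma$ be an involution, be fixed-point-free, interchange the colour classes, and have no invariant edge. The expectation is that these requirements fuse into the single congruence $k^{2}\equiv1\pmod{2n}$, i.e.\ $n\mid\frac{k^{2}-1}{2}$, which explains why the correct modulus is $2n$ rather than $n$. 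Tracing the Hamiltonian cycle induced on the fused orbits and reading off the chord offsets then yields the LCF strings of Definition~\ref{def:cycle+matching}, with the residue of $k$ modulo $4$ selecting the branch: $k\equiv1$ gives $C^{+}(n,k)$ and $k\equiv3$ gives $C^{-}(n,k)$.

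Finally I treat the seven exceptional pairs directly and assemble the global uniqueness claim. The M\"obius--Kantor graph $\gp{8,3}$ is a Kronecker cover although $8\nmid4$, its enlarged symmetry supplying the missing involution and the quotient $C^{-}(8,3)$; the Desargues graph $\gp{10,3}$ carries, besides the half-turn whose quotient is the Petersen graph $\gp{5,2}$, a second non-conjugate class of suitable involutions (a rim-swap, available since $3^{2}\equiv-1\pmod{10}$) with quotient $\XYX$. As $(10,3)$ is the only exceptional pair that is bipartite with $n\equiv2\pmod4$, and as for every non-exceptional pair all suitable involutions are conjugate in $D_{n}$ and hence give a single quotient, $\gp{10,3}$ is the unique generalized Petersen graph covering two non-isomorphic graphs, while all remaining pairs fall under statement $c)$. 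The main obstacle is precisely this coordination step: proving that for non-exceptional $(n,k)$ there is, up to conjugacy, at most one suitable involution, pinning the modular condition to $2n$ instead of $n$, and matching the branches $k\equiv1,3\pmod4$ to the $C^{+}/C^{-}$ dichotomy.
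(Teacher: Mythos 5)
Your overall strategy is the same as the paper's: reduce the question to the existence of a fixed-point-free, colour-reversing involution with no invariant edge, discard the non-bipartite pairs, invoke the Frucht--Graver--Watkins description of $\mathrm{Aut}(\gp{n,k})$, handle the half-turn $\alpha^{n/2}$ for $n\equiv2\pmod4$, and look for rim-swapping involutions when $n\equiv0\pmod4$. The half-turn analysis and the resulting quotients $\gp{n/2,k}$, $\gp{n/2,n/2-k}$ are carried out correctly and match the paper's Proposition on $\kc{\gp{n,k}}$ for odd $n$.

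However, the technical core of the theorem is missing. First, for the rim-swapping candidates you write down $\sigma\colon u_i\mapsto v_{ki+b}$ and then state that ``the expectation is'' that the four requirements fuse into $2n\mid k^2-1$; this is exactly the part that must be proved, and it is not a formality. The paper needs the full apparatus of the parameters $a_{\min}=n/\gcd(n,k+1)$ (resp.\ $n/\gcd(n,n-k+1)$), the classification of admissible shifts as odd multiples of $a_{\min}$, and a 2-adic valuation argument on $n$, $k\pm1$ and $Q=(k^2-1)/n$ to show (i) that $Q$ odd forces a fixed spoke at $r=\frac{k+a+1}{2}$, (ii) that $k\equiv3\pmod4$ kills all involutions of type $\alpha^a\gamma$ and $k\equiv1\pmod4$ kills all of type $\alpha^a\beta\gamma$, and (iii) conversely that under $Q$ even no spoke is fixed. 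None of this is in your write-up, and the $k\bmod4$ dichotomy in particular does not ``select a branch'' automatically --- it has to be extracted from a parity contradiction in $i\cdot\gcd(k-1,Q)+s\equiv0\pmod{\gcd(n,k+1)}$. Second, your uniqueness argument rests on the claim that all suitable involutions are conjugate in $D_n$; that is both unproven and false in the relevant situation, since the rim-swapping involutions do not lie in $D_n$ and there are $\gcd(n,k+1)/2$ of them (one for each odd $s$). The paper instead proves directly that changing $a$ by $\gcd(k-1,Q)\cdot a_{\min}$ cyclically shifts the LCF sequence of the quotient, and then a gcd computation shows there is a single equivalence class; moreover, when $n\equiv2\pmod4$ and $k^2\equiv1\pmod{2n}$ both the half-turn and the rim-swaps are present, so one must also reconcile those two families of quotients --- a case your framework does not even register. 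Finally, the blanket statement that outside the seven exceptional pairs the automorphism group is $D_n$ misquotes the classification (the group is strictly larger whenever $k^2\equiv\pm1\pmod n$); you recover from this implicitly, but as written it contradicts your own later use of rim-swapping automorphisms. As it stands the proposal is a correct outline with the decisive lemmas left unproved.
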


For $k = 1$ and even $n$ each $G(n,1)$ is a Kronecker cover. However, if $n = 4t$ case \ref{thm1:b1} applies and the quotient graph is even Moebius ladder.
For $G(4,1)$ the quotient is $K_4 = M_4$.  Similarly, the 8-sided prism $G(8,1)$ is a Kronecker cover of $M_8$.
In case $n = 4t+2$ the case \ref{thm1:a1} applies and the quotient is $G(n/2,1)$. For instance, the 6-sided prism is a Kronecker cover of a 3-sided prism.
For $k>1$ the smallest cases stated in Theorem~\ref{thm1} are presented in Table~\ref{tab:small-cases}.

It is well-known that any automorphism of a connected
bipartite graph either preserves the
two sets of bipartition or interchanges the two
sets of bipartition. In the former case we call the automorphism \emph{colour preserving} and in the latter case \emph{colour reversing}.
Clearly, the product of two color-reversing automorphisms is a color preserving automorphism and the collection of color preserving automorphisms
determine a subgroup of the full automorphism group that is of index 2.

\section{Identifying the Kronecker involutions}\label{sec:id_of_involut}

Before we state an important condition that classifies
Kronecker covers we give the following
definition.

\begin{defn}
A fixed-point free involution $\omega$ that is a color-reversing automorphism of a bipartite graph
is called a \emph{Kronecker involution}.
\end{defn}

We proceed by a well-known proposition by \cite{imrich2008multiple},
regarding the existence of Kronecker covers.
\begin{thm}
\label{prop:involutions}For a bipartite graph $G$, there exists
$G'$ such that $\kc{G'}\simeq G$, if and only if $Aut(G)$ admits
a Kronecker involution. Furthermore,
the corresponding quotient graph may be obtained by contracting all
pairs of vertices, naturally coupled by a given Kronecker involution.
\end{thm}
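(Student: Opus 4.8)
The plan is to prove the two implications separately and then read the quotient construction directly off the backward direction. Write the bipartition of $G$ as $(A,B)$ and recall that a color-reversing automorphism interchanges $A$ and $B$ whereas a color-preserving one fixes each setwise.

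For the forward implication, suppose $G\simeq\kc{G'}$, so that we may take $V(G)=\left\{v',v''\right\}_{v\in V(G')}$ and $E(G)=\left\{u'v'',u''v'\right\}_{uv\in E(G')}$. I would exhibit the map $\omega$ that swaps the two copies of every vertex, $\omega(v')=v''$ and $\omega(v'')=v'$. It is at once an involution and fixed-point free. It is an automorphism because the defining edge set is invariant under the swap: $\omega$ carries $u'v''$ to $u''v'$, again an edge. Since the two color classes of $\kc{G'}$ are exactly $\left\{v'\right\}_{v}$ and $\left\{v''\right\}_{v}$ and $\omega$ interchanges them, $\omega$ is color-reversing, hence a Kronecker involution.

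For the backward implication, suppose $\mathrm{Aut}(G)$ contains a Kronecker involution $\omega$. Its orbits are the pairs $\left\{v,\omega(v)\right\}$, and since $\omega$ is color-reversing each such pair meets $A$ and $B$ in exactly one vertex each. I would define $G'$ on these orbits, joining two orbits exactly when $G$ has an edge between them; this is the contraction named in the statement. Labelling, for each orbit, its vertex in $A$ as $v'$ and its vertex in $B$ as $v''=\omega(v')$ gives $V(G)=\left\{v',v''\right\}$ over the orbits. As $G$ is bipartite every edge has the form $u'v''$, and applying $\omega$ shows $u''v'$ is an edge as well; thus the edges of $G$ fall into the $\omega$-coupled pairs $\left\{u'v'',u''v'\right\}$, one for each edge $uv$ of $G'$, which is precisely the edge set of $\kc{G'}$. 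Hence $G\simeq\kc{G'}$.

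The delicate step is the passage to a genuine simple quotient and the check that the edge correspondence is a bijection. A priori the contraction is a multigraph: the two lifts $u'v''$ and $u''v'$ of one edge $uv$ must be identified rather than counted twice, and, more seriously, an edge could join $v'$ to its own partner $v''=\omega(v')$, creating a loop. Fixed-point-freeness alone does not preclude this --- the $4$-cycle carries a color-reversing fixed-point-free involution with $v'\sim\omega(v')$ --- so the real content is to isolate when the contraction is loopless and reduced. For the swap involution coming from an actual simple cover this is automatic, since loops and repeated edges of $G'$ are forbidden by simplicity. Carrying out this bookkeeping carefully --- matching each edge $uv$ of $G'$ with exactly the unordered pair $\left\{u'v'',u''v'\right\}$ and ruling out collisions between distinct edges --- is the crux; once it is done, both $G\simeq\kc{G'}$ and the description of the quotient follow.
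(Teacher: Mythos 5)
First, a point of reference: the paper does not prove this statement at all --- it is quoted as a known result from \cite{imrich2008multiple} --- so your attempt has to stand on its own. Your forward direction is complete and correct: the swap $v'\leftrightarrow v''$ is an automorphism of $\kc{G'}$, is a fixed-point-free involution, and interchanges the two colour classes. Your backward direction, however, stops exactly at the point you yourself call the crux, and that is a genuine gap: as written, the proof is not finished. Worse (or better, depending on how you look at it), the obstruction you identify is real. Your $C_4$ example is valid: the reflection $1\leftrightarrow 2$, $3\leftrightarrow 4$ is a colour-reversing involution with no fixed \emph{vertex}, yet $C_4$ is not the Kronecker cover of any simple graph, so the theorem is literally false if ``fixed-point free'' is read as referring to vertices only. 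The missing idea is the correct reading of the hypothesis: a Kronecker involution must act freely on the graph as a whole, edges included --- equivalently, no vertex may be adjacent to its image. This is demonstrably how the paper uses the term: in the proof of Proposition~\ref{prop:dihedral+} the map $\alpha^a\beta$ is disqualified precisely because it fixes an edge $u_iu_{i+1}$, and Propositions~\ref{prop:sufficient} and \ref{_prop:sufficient} each verify separately that no spoke is fixed. You needed to state this strengthened hypothesis and then use it, rather than leave the question open.

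Once that reading is adopted, the bookkeeping you deferred is short, and you should have carried it out. Loops cannot occur in the quotient: an edge from $v'$ to $v''=\omega(v')$ would be inverted, i.e.\ fixed setwise, by $\omega$, which the hypothesis now excludes. Multiple edges cannot occur: between two orbits $\{u',u''\}$ and $\{v',v''\}$ the only vertex pairs not lying inside a single colour class are $u'v''$ and $u''v'$, and $\omega$-invariance of the edge set forces either both or neither to be edges of $G$; hence every edge of the quotient has exactly the two lifts demanded by the definition of $\kc{G'}$. Finally, distinct neighbours of a fixed vertex lie in distinct orbits (each orbit meets each colour class exactly once), so the projection is injective on every vertex star, which yields the covering property and the isomorphism $G\simeq\kc{G'}$. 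In summary: the strategy is the standard and correct one, and your diagnosis of the weak point is sharp and worth keeping, but the proof as submitted is incomplete because the decisive step is announced and never performed.
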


The following result is well-known. One can find it, for instance in \cite{horvat2012isomorphism}.

\begin{thm}
A generalized Petersen graph $G(n,k)$ is bipartite if
and only if $n$ is even and $k$ is odd.
\end{thm}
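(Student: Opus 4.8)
The plan is to use the standard characterization that a graph is bipartite if and only if it contains no cycle of odd length, and to analyze the three edge-classes $\O{n,k}$, $\I{n,k}$, $\S{n,k}$ separately.

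For the forward direction, suppose $\gp{n,k}$ is bipartite. The outer rim $\O{n,k}$ is a single cycle of length $n$, and since a bipartite graph has no odd cycle, $n$ must be even. To force $k$ to be odd, I would exhibit a short cycle whose length has opposite parity to $k$: concatenate the outer path $u_0 u_1 \cdots u_k$ (which has $k$ edges), the spoke $u_k v_k$, the inner edge $v_k v_0$ (legitimate since $v_0 v_k \in \I{n,k}$ with index $i=0$), and the spoke $v_0 u_0$. Because $0 < k < n$, these vertices are pairwise distinct, so this is a genuine cycle of length $k+3$. Bipartiteness forces $k+3$ to be even, hence $k$ is odd.

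For the converse, assume $n$ is even and $k$ is odd; I would produce an explicit proper $2$-colouring. Set $c(u_i) = i \bmod 2$ and $c(v_i) = (i+1) \bmod 2$. Since $n$ is even, residues mod $n$ preserve parity, so $c$ is well-defined on all indices taken modulo $n$. It then remains to verify that each of the three edge-classes joins vertices of opposite colour: spokes $u_i v_i$ connect colours $i$ and $i+1$; outer edges $u_i u_{i+1}$ connect $i$ and $i+1$; and inner edges $v_i v_{i+k}$ connect $(i+1)$ and $(i+k+1)$, which differ in parity precisely because $k$ is odd.

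The only genuine subtlety---and the place where the hypothesis is really used---is the behaviour at the wrap-around, i.e.\ the edge $u_{n-1}u_0$ and the inner edges that cross the index boundary $n-1 \to 0$. Here the evenness of $n$ is essential: it guarantees that reducing an index modulo $n$ does not change its parity, so the colouring stays consistent across the identification $n \equiv 0$. Everything else is a routine parity check, so I expect no substantial obstacle beyond carefully confirming well-definedness of the colouring.
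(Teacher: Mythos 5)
Your proof is correct and complete. Note that the paper itself offers no proof of this statement --- it is labelled ``well-known'' and attributed to the literature (Horvat et al.) --- so there is no in-paper argument to compare against; your write-up supplies the standard elementary verification that the citation replaces. Both halves check out: the cycle $u_0u_1\cdots u_k v_k v_0 u_0$ does use $k+3$ edges and its vertices are distinct since $0<k<\frac{n}{2}$, which together with the $n$-cycle on the outer rim forces $n$ even and $k$ odd; and the $2$-colouring $c(u_i)=i\bmod 2$, $c(v_i)=(i+1)\bmod 2$ is well defined exactly because $n$ is even and separates the endpoints of every edge in each of the three edge classes exactly because $k$ is odd. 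No gaps.
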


We also include the classification concerning symmetry of generalized Petersen graphs,
which follows from the work of Frucht et al.,
\cite{frucht1971groups}
Nedela and \v{S}koviera,
\cite{nedela1995generalized}
 and Lovre\v{c}i\v{c}-Sara\v{z}in \cite{saravzin1997note}.

\begin{thm}[\cite{frucht1971groups,nedela1995generalized,saravzin1997note}]
Let $\gp{n,k}$ be a generalized Petersen graph. Then:
\begin{enumerate}[label=\emph{\alph*)}]
    \item it is symmetric if and only if $\left(n,k\right)\in\left\{ \left(4,1\right),\left(5,2\right),\left(8,3\right),\left(10,2\right),\left(10,3\right),\left(12,5\right),\left(24,5\right)\right\} ,$
    \item it is vertex-transitive if and only if  $k^{2}\equiv \pm 1 \pmod n$ or if $n=10$ and $k=2$,
    \item it is a Cayley graph if and only if $k^{2}\equiv1\pmod n$.
\end{enumerate}
\end{thm}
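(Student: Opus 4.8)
The plan is to analyze the automorphism group $\mathrm{Aut}(\gp{n,k})$ directly, since all three properties are group-theoretic in nature. First I would record the two canonical automorphisms: the rotation $\rho$ given by $u_i\mapsto u_{i+1}$, $v_i\mapsto v_{i+1}$, and the reflection $\mu$ given by $u_i\mapsto u_{-i}$, $v_i\mapsto v_{-i}$. Together they generate a dihedral group $D_n\le\mathrm{Aut}(\gp{n,k})$ of order $2n$ that preserves the partition of vertices into the outer set $\{u_i\}$ and the inner set $\{v_i\}$ and acts transitively on each. Since $|V(\gp{n,k})|=2n$, the entire question reduces to deciding when there is an additional automorphism interchanging the two rims, as $D_n$ alone can never be vertex-transitive.

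For part b) I would look for a rim-swap of the form $\sigma\colon u_i\mapsto v_{ki}$, $v_i\mapsto u_{ki}$ (up to a shift). Requiring that $\sigma$ carry the outer rim $\O{n,k}$ onto the inner rim $\I{n,k}$ and preserve the spokes $\S{n,k}$ forces, after a short computation modulo $n$, the congruence $k^{2}\equiv\pm1\pmod n$, the sign recording whether $\sigma^{2}=\mathrm{id}$ or $\sigma^{2}=\mu$, i.e. whether $\sigma$ is an involution or has order $4$. This uses implicitly that $k^{2}\equiv\pm1$ already entails $\gcd(n,k)=1$, so that the inner rim is a single $n$-cycle matching the outer one. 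This yields sufficiency. For the converse I would argue that whenever $k^{2}\not\equiv\pm1$ no rim-swap can exist, so $\mathrm{Aut}=D_n$ and the graph fails to be vertex-transitive; the single sporadic exception $(n,k)=(10,2)$, the dodecahedron, must be treated separately, since it is vertex-transitive even though $2^{2}=4\not\equiv\pm1\pmod{10}$.

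For part c) I would invoke the standard criterion that a graph is Cayley precisely when $\mathrm{Aut}$ contains a subgroup acting regularly on the vertices. When $k^{2}\equiv1\pmod n$ the swap $\sigma$ is a fixed-point-free involution interacting suitably with $\rho$, so $\langle\rho,\sigma\rangle$ has order $2n=|V|$ and acts regularly; hence $\gp{n,k}$ is Cayley. When $k^{2}\equiv-1$ (but $\not\equiv1$) the corresponding $\sigma$ has order $4$ and the resulting vertex-transitive group has nontrivial vertex stabilizers with no regular subgroup available; together with the dodecahedron these give the vertex-transitive but non-Cayley graphs (the Petersen graph $\gp{5,2}$ being the prototypical case, as $2^{2}\equiv-1\pmod 5$). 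Verifying the non-existence of any regular subgroup in these residual cases is the delicate point here.

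Finally, for part a) I would use that being symmetric (arc-transitive) forces vertex-transitivity, so the candidate pairs already satisfy the condition of part b). Arc-transitivity additionally demands that the stabilizer of a vertex act transitively on its three incident edges — one outer-rim edge, one inner-rim edge, and the spoke — which is extremely rigid: an automorphism fixing a vertex and permuting its three edges essentially pins down the local structure, and a girth/counting argument then bounds $n$. The main obstacle is exactly this last enumeration: ruling out the infinitely many vertex-transitive pairs and isolating precisely the seven sporadic solutions $(4,1),(5,2),(8,3),(10,2),(10,3),(12,5),(24,5)$ requires the careful case analysis of Frucht, Graver and Watkins, and is far less mechanical than parts b) and c).
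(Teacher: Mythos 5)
First, note that the paper does not prove this theorem at all: it is imported verbatim from Frucht--Graver--Watkins, Nedela--\v{S}koviera and Lovre\v{c}i\v{c}-Sara\v{z}in, so there is no in-paper argument to compare yours against. Judged on its own terms, your sketch gets the sufficiency directions right (the rim-swap $\sigma\colon u_i\mapsto v_{ki}$, $v_i\mapsto u_{ki}$ is an automorphism exactly when $k^2\equiv\pm1\pmod n$, and when $k^2\equiv 1$ the group $\langle\rho,\sigma\rangle$ of order $2n$ acts regularly, giving the Cayley property), but every necessity direction rests on an unproved assumption that carries essentially all the content of the cited theorem.

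The gap is this: you repeatedly argue as if every automorphism of $\gp{n,k}$ either lies in $D_n$ or is a rim-swap of the prescribed form, i.e.\ as if $\mathrm{Aut}(\gp{n,k})=\langle\rho,\mu,\sigma\rangle$ whenever such a $\sigma$ exists and $=D_n$ otherwise. That is precisely the hard theorem of Frucht, Graver and Watkins (one must show that outside finitely many exceptional pairs the set of spokes is preserved by every automorphism, via a delicate count of short cycles through each edge), and it is false in general --- your own exception $(10,2)$ refutes the implication ``no rim-swap $\Rightarrow$ $\mathrm{Aut}=D_n$'', and the paper itself exhibits the automorphism $\Delta$ of $\gp{10,3}$ lying outside $\langle\alpha,\beta,\gamma\rangle$. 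Without that structural theorem you cannot conclude non-vertex-transitivity in part b), cannot rule out regular subgroups in part c) (where the non-Cayley claim for $k^2\equiv-1$ needs an actual argument, e.g.\ that every regular subgroup of $\langle\rho,\sigma\rangle$ would force an involution swapping the rims, which exists only if $k^2\equiv1$), and cannot perform the enumeration in part a), which you explicitly defer. So the proposal is a correct high-level roadmap of how the literature proves this, but as a proof it is circular at the decisive step: the classification of $\mathrm{Aut}(\gp{n,k})$ is assumed rather than established.
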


In general the word \emph{symmetric} means arc-transitive. For cubic graphs this is equivalent to saying vertex-transitive and edge-transitive. For generalized Petersen graph symmetric is equivalent to edge-transitive.

In order to understand which generalized Petersen graphs are Kronecker covers we have to identify all Kronecker involutions for each $G(n,k)$.
In what follows, for a given pair $(n,k)$, our arguments rely on
the structure of the automorphism group $A(n,k)$ of $\gp{n,k}$.
This means we have to understand the automorphisms of $G(n,k).$
We define three permutations that may be
defined on the vertex set of a generalized Petersen graph and play an important role in describing its automorphism group.

\begin{defn}
For $i\in\left[0,n-1\right]$, define the permutations $\alpha,\beta$
and $\gamma$ on $V\left(\gp{n,k}\right)$ by
\begin{eqnarray*}
\alpha\left(u_{i}\right)=u_{i+1}, & \quad & \alpha\left(v_{i}\right)=v_{i+1},\\
\beta\left(u_{i}\right)=u_{-i}, & \quad & \beta\left(v_{i}\right)=v_{-i},\\
\gamma\left(u_{i}\right)=v_{ki}, & \quad & \gamma\left(v_{i}\right)=u_{ki}.
\end{eqnarray*}
\end{defn}

\begin{table}
\centering
\begin{tabular}{|l|l|l|l|l|} \hline
$n$ & $k$ & case & involution & quotient \\ \hline \hline
$4$ & $1$ & $b_1$ & $ \alpha^{2} \gamma$ & $C^+(4,1)$ \\ \hline
$6$ & $1$ & $a_1$ & $\alpha ^{3}$ & $G(3,1)$ \\ \hline
$8$ & $1$ & $b_1$ & $ \alpha^{4} \gamma$ & $C^+(8,1)$ \\ \hline
$8$ & $3$ & $b_2$ & $ \alpha^{4} \beta \gamma$ & $C^-(8,3)$ \\ \hline
$10$ & $1$ & $a_1$ & $\alpha ^{5}$ & $G(5,1)$ \\ \hline
$10$ & $3$ & $\cdot$ & $ \alpha^5,\Delta$ & $G(5,2),X$ \\ \hline
$12$ & $1$ & $b_1$ & $ \alpha^{6} \gamma$ & $C^+(12,1)$ \\ \hline
$12$ & $5$ & $b_1$ & $ \alpha^{6} \gamma$ & $C^+(12,5)$ \\ \hline
$14$ & $1$ & $a_1$ & $\alpha ^{7}$ & $G(7,1)$ \\ \hline
$14$ & $3$ & $a_1$ & $\alpha ^{7}$ & $G(7,3)$ \\ \hline
$16$ & $1$ & $b_1$ & $ \alpha^{8} \gamma$ & $C^+(16,1)$ \\ \hline
$18$ & $1$ & $a_1$ & $\alpha ^{9}$ & $G(9,1)$ \\ \hline
$18$ & $3$ & $a_1$ & $\alpha ^{9}$ & $G(9,3)$ \\ \hline
$18$ & $5$ & $a_2$ & $\alpha ^{9}$ & $G(9,4)$ \\ \hline
$20$ & $1$ & $b_1$ & $ \alpha^{10} \gamma$ & $C^+(20,1)$ \\ \hline
$20$ & $9$ & $b_1$ & $ \alpha^{10} \gamma$ & $C^+(20,9)$ \\ \hline
$22$ & $1$ & $a_1$ & $\alpha ^{11}$ & $G(11,1)$ \\ \hline
$22$ & $3$ & $a_1$ & $\alpha ^{11}$ & $G(11,3)$ \\ \hline
$22$ & $5$ & $a_1$ & $\alpha ^{11}$ & $G(11,5)$ \\ \hline
$24$ & $1$ & $b_1$ & $ \alpha^{12} \gamma$ & $C^+(24,1)$ \\ \hline
$24$ & $7$ & $b_2$ & $ \alpha^{12} \beta \gamma$ & $C^-(24,7)$ \\ \hline
$26$ & $1$ & $a_1$ & $\alpha ^{13}$ & $G(13,1)$ \\ \hline
$26$ & $3$ & $a_1$ & $\alpha ^{13}$ & $G(13,3)$ \\ \hline
$26$ & $5$ & $a_1$ & $\alpha ^{13}$ & $G(13,5)$ \\ \hline
$26$ & $7$ & $a_2$ & $\alpha ^{13}$ & $G(13,6)$ \\ \hline
\end{tabular}
\caption{\label{tab:small-cases}The smallest generalized Petersen graphs that are Kronecker covers, together with their corresponding  Kronecker involutions $\omega$ and the quotient graphs.}
\end{table}

Let us paraphrase Theorem 5 of Lover\v{c}i\v{c}-Sara\v{z}in \cite{saravzin1997note} that follows from Frucht et al. \cite{frucht1971groups}

\begin{thm}\label{thm:aut-groups}

If $(n,k)$ is not one of $(4,1),(5,2),(8,3),(10,2),(10,3),(12,5)$, or $(24,5)$, then the following holds:
\begin{itemize}
\item if $k^2 \equiv 1$ mod $n$, then
$$A(n,k) = \langle \alpha, \beta, \gamma | \alpha^n = \beta^2 = \gamma^2 = 1, \alpha\beta = \beta\alpha^{-1}, \alpha\gamma = \gamma\alpha^k, \beta\gamma = \gamma\beta \rangle$$
\item if $k^2 \equiv -1$ mod $n$, then
$$A(n,k) = \langle \alpha, \beta, \gamma | \alpha^n = \beta^2 = \gamma^4 = 1, \alpha\beta = \beta\alpha^{-1}, \alpha\gamma = \gamma\alpha^k, \beta\gamma = \gamma\beta  \rangle$$
In this case $\beta = \gamma^2$.
\item In all other cases the graph $G(n,k)$ is not vertex-transitive and
$$A(n,k) = \langle \alpha, \beta | \alpha^n = \beta^2 = 1, \alpha\beta = \beta\alpha^{-1}  \rangle$$

\end{itemize}
\end{thm}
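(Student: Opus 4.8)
The plan is to separate the two containments: first that the listed elements are automorphisms obeying the stated relations, and then the harder fact that nothing else occurs. For the first, I would check directly from the definitions that $\alpha$ and $\beta$ are automorphisms of $\gp{n,k}$ for every admissible pair. The rotation $\alpha$ and the reflection $\beta\colon i\mapsto-i$ each preserve the three edge classes $\O{n,k}$, $\I{n,k}$ and $\S{n,k}$, and they satisfy $\alpha^{n}=\beta^{2}=1$ and $\alpha\beta=\beta\alpha^{-1}$; hence $\langle\alpha,\beta\rangle$ is a copy of the dihedral group of order $2n$ inside $A(n,k)$ in all cases. I would then determine when $\gamma$ is an automorphism: it carries each spoke $u_{i}v_{i}$ to the spoke $v_{ki}u_{ki}$ and each outer edge $u_{i}u_{i+1}$ to the inner edge $v_{ki}v_{ki+k}$, so the only condition is that it send inner edges $v_{i}v_{i+k}$ to outer edges $u_{ki}u_{ki+k^{2}}$, which holds exactly when $k^{2}\equiv\pm1\pmod n$. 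A short computation gives $\gamma^{2}\colon x_{i}\mapsto x_{k^{2}i}$, so $\gamma^{2}=1$ when $k^{2}\equiv1$ and $\gamma^{2}=\beta$ when $k^{2}\equiv-1$, together with $\alpha\gamma=\gamma\alpha^{k}$ and $\beta\gamma=\gamma\beta$. This exhibits each presented group as a subgroup of $A(n,k)$, of order $4n$ in the vertex-transitive case.

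The hard part is the reverse containment, and it reduces to one structural lemma: for every $(n,k)$ outside the seven exceptional pairs, the spoke matching $\S{n,k}$ is preserved setwise by every automorphism of $\gp{n,k}$. I would prove this by a short-cycle count, distinguishing spokes from rim edges according to the number of shortest cycles through them. Since $\gp{n,k}$ has girth at most $8$ with well-understood short cycles, one tabulates for each edge type how many girth cycles it lies on; away from the exceptional pairs these counts separate $\S{n,k}$ from $\O{n,k}\cup\I{n,k}$, forcing every automorphism to permute spokes among themselves. I expect this counting lemma to be the main obstacle, and it is no accident that it fails exactly on $(4,1),(5,2),(8,3),(10,2),(10,3),(12,5),(24,5)$: these are precisely the arc-transitive generalized Petersen graphs, in which the edge classes are indistinguishable and $A(n,k)$ is genuinely larger.

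Granting the lemma, the endgame is direct. An automorphism fixing $\S{n,k}$ setwise either preserves the two vertex orbits $\{u_{i}\}$ and $\{v_{i}\}$ or interchanges them. If it preserves them, it restricts to a symmetry of the outer cycle $\O{n,k}\cong C_{n}$ and hence agrees on the outer rim with an element of $\langle\alpha,\beta\rangle$; since the spokes then determine the action on the inner rim, the automorphism equals that element, so the orbit-preserving subgroup is exactly $\langle\alpha,\beta\rangle$, of order $2n$. If it swaps the orbits, it must map the outer $n$-cycle onto the inner rim, which is possible only when the inner rim is a single $n$-cycle, i.e. $\gcd(n,k)=1$; writing the swap as $u_{i}\mapsto v_{c\pm ki}$ and propagating through the spokes and inner edges forces $k^{2}\equiv\pm1\pmod n$, and recovers a coset of $\langle\alpha,\beta\rangle$ represented by a $\gamma$-type map. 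Hence $|A(n,k)|=2n$ when $k^{2}\not\equiv\pm1$ and $|A(n,k)|=4n$ otherwise, matching the orders of the two presentations; identifying the generators (with $\beta=\gamma^{2}$ in the $k^{2}\equiv-1$ case) finishes the argument. This is consistent with the vertex-transitivity classification quoted above, where vertex-transitivity corresponds to $k^{2}\equiv\pm1\pmod n$.
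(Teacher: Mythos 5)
The first thing to note is that the paper does not prove this theorem at all: it is explicitly presented as a paraphrase of Theorem~5 of Lovre\v{c}i\v{c}-Sara\v{z}in, which in turn rests on Frucht, Graver and Watkins, so the only fair comparison is with that cited proof. Your outline is essentially a reconstruction of the classical Frucht--Graver--Watkins argument: the forward containment (that $\alpha,\beta$ are always automorphisms, that $\gamma$ is one precisely when $k^{2}\equiv\pm1\pmod n$, and the verification of the relations and of $\gamma^{2}=\beta$ in the $k^{2}\equiv-1$ case) is routine and correct, and the reduction of the reverse containment to the statement that every automorphism preserves the spoke set $\S{n,k}$ is exactly the right structural pivot. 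The endgame you describe --- orbit-preserving automorphisms restrict to the dihedral group of the outer $n$-cycle and are then determined through the spokes, while orbit-swapping ones force $\gcd(n,k)=1$ and $k^{2}\equiv\pm1\pmod n$ and land in the $\gamma$-coset --- is also the standard and correct conclusion.

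The genuine gap is the spoke-preservation lemma itself, which you assert via ``a short-cycle count'' but do not carry out; this is where essentially all of the work in the cited thirty-odd-page analysis lives. Two concrete cautions. First, counting girth cycles through each edge is not sufficient in general: Frucht, Graver and Watkins count $8$-cycles through each edge irrespective of the girth, and even then the counts do not cleanly separate the spokes in every non-exceptional case, so a further case analysis is needed; calling this ``no accident'' because the seven pairs are the arc-transitive graphs is a consistency check, not a proof that the count fails \emph{only} there. Second, a small point you inherit from the statement itself: with $\gamma(u_{i})=v_{ki}$ one computes $(\alpha\gamma)(u_{i})=v_{ki+1}$ and $(\gamma\alpha^{x})(u_{i})=v_{ki+kx}$, so the relation is $\alpha\gamma=\gamma\alpha^{k^{-1}}$; this gives $\alpha\gamma=\gamma\alpha^{k}$ when $k^{2}\equiv1$ but $\alpha\gamma=\gamma\alpha^{-k}$ when $k^{2}\equiv-1$ (consistent with rule~(3) in the proof of Lemma~\ref{rem:autom-shuffle}), so the relation you quote for the second presentation should be checked rather than copied. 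As a self-contained proof your text is therefore an accurate roadmap with the central combinatorial lemma missing; as a substitute for the paper's citation it would need that lemma to be proved in full.
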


Observe that in the case when the underlying graph is symmetric, the automorphism group may not be described by $\alpha, \beta$ and $\gamma$.
For illustration, consider the following permutation $\Delta$ of the vertex-set of Desargues graph $\gp{10,3}$, as constructed on Figure~\ref{fig:gamma}, where all the vertices from $C_6\square P_3$ are rotated around hexagon for $180^{\circ}$, while the additional two vertices are swapped.
Since any member of $\left < \alpha,\beta,\gamma\right >$
 either fixes each rim set-wise or swaps them,
it is easy to see that $\Delta\in Aut(\gp{10,3})$ is not generated by $\alpha, \beta$ nor $\gamma$. Hence the symmetric bipartite members of $\P$ need to be checked separately.
It turns out that all quotients of $\P$ may be obtained
by Kronecker involutions from $\left < \alpha,\beta,\gamma\right >$,
or by $\Delta$ (in the case $n=10$ and $k=3$).

However, for the non-symmetric members of generalized Petersen graphs, Theorem~\ref{thm:aut-groups} implies that any element of automorphism group (including any Kronecker involution) may be
expressed in terms
of $\alpha, \beta$ and $\gamma$. In fact, in the next lemma we show that any such element
may be expressed in a canonical way.

\begin{lem}
	\label{rem:autom-shuffle}
For any automorphism $\omega$ from $A(n,k)$ we may associate a unique triple $(a,b,c) \in \mathbb{Z}_n \times \mathbb{Z}_2 \times \mathbb{Z}_2$
	such that $\omega=\alpha^{a}\beta^{b}\gamma^{c}$,
    whenever $(n,k)$ is not equal to one of the symmetric pairs $(4,1),(5,2),(8,3),(10,2),(10,3),(12,5)$, or $(24,5)$.
\end{lem}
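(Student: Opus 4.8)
The plan is to split the statement into an existence part (every $\omega$ can be written as $\alpha^{a}\beta^{b}\gamma^{c}$) and a uniqueness part (the triple $(a,b,c)$ is determined by $\omega$), and to handle the three non-symmetric regimes of Theorem~\ref{thm:aut-groups} --- $k^{2}\equiv 1$, $k^{2}\equiv -1$, and neither --- in parallel. For existence I would argue purely formally from the defining relations. The relation $\alpha\gamma=\gamma\alpha^{k}$ lets me push every occurrence of $\alpha$ to the left of every $\gamma$ (at the cost of replacing the exponent by a power of $k$, which stays in $\langle\alpha\rangle$ since $k$ is invertible modulo $n$ whenever $k^{2}\equiv\pm1$), the relation $\alpha\beta=\beta\alpha^{-1}$ lets me push $\alpha$ past $\beta$, and $\beta\gamma=\gamma\beta$ lets me collect the $\beta$'s in the middle. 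An arbitrary word therefore collapses to $\alpha^{a}\beta^{b}\gamma^{c}$ with $a\in\mathbb{Z}_{n}$; reducing $b$ modulo $2$ (as $\beta^{2}=1$) and $c$ modulo the order of $\gamma$ completes the normalisation. In the $k^{2}\equiv 1$ case $\gamma^{2}=1$ already gives $c\in\mathbb{Z}_{2}$; in the $k^{2}\equiv -1$ case I would additionally use $\beta=\gamma^{2}$ to absorb $\gamma^{2}$ into $\beta$, so that after reduction $c$ again lies in $\{0,1\}$; and in the non-vertex-transitive case there is no $\gamma$, so one automatically gets $c=0$ and the triple is $(a,b,0)$.

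For uniqueness I would abandon the abstract presentation and instead exploit that $\alpha,\beta,\gamma$ are concrete permutations of $V(\gp{n,k})$. Two invariants suffice. First, $\alpha$ and $\beta$ fix each rim set-wise whereas $\gamma$ interchanges the two rims, so the word $\alpha^{a}\beta^{b}\gamma^{c}$ swaps the rims precisely when $c$ is odd; this pins down $c\bmod 2$, and since $c\in\{0,1\}$ it pins down $c$. Cancelling $\gamma^{c}$, the remaining identity lives in the rim-preserving subgroup $\langle\alpha,\beta\rangle$, which is dihedral of order $2n$: indeed $\alpha$ is a rotation of order $n$ and $\beta$ a reflection fixing $u_{0}$, so $\beta\notin\langle\alpha\rangle$ and the representation $\alpha^{a}\beta^{b}$ with $(a,b)\in\mathbb{Z}_{n}\times\mathbb{Z}_{2}$ is the standard (unique) dihedral normal form. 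Hence $\alpha^{a}\beta^{b}\gamma^{c}=\alpha^{a'}\beta^{b'}\gamma^{c'}$ forces first $c=c'$ and then $a=a'$, $b=b'$, which is exactly the asserted uniqueness.

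The delicate point is the $k^{2}\equiv -1$ regime, where $\beta=\gamma^{2}$ makes the three generators dependent and $\gamma$ has order $4$. Here one must be sure that the $4n$ formal triples do not secretly collapse: a priori $\alpha^{a}\beta^{b}\gamma^{c}=\alpha^{a}\gamma^{2b+c}$, and it has to be checked that $2b+c$ runs bijectively over $\mathbb{Z}_{4}$ as $(b,c)$ runs over $\mathbb{Z}_{2}\times\mathbb{Z}_{2}$. This is precisely what the permutation-theoretic uniqueness argument above confirms, since $\gamma$ acting on $V(\gp{n,k})$ genuinely has order $4$. The other place where care is needed is the tacit use of the fact that, for non-symmetric $(n,k)$, \emph{every} automorphism either preserves or swaps the two rims; this is guaranteed by Theorem~\ref{thm:aut-groups}, which describes the whole group in terms of $\alpha,\beta,\gamma$, and it is exactly the property that fails for the excluded symmetric pairs, as the rim-mixing automorphism $\Delta$ of $\gp{10,3}$ illustrates. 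Everything else is routine relation-chasing.
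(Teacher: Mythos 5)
Your proposal is correct, and its existence half is essentially the paper's own argument: the paper likewise lists the commuting rules $\beta\alpha^{a}=\alpha^{-a}\beta$, $\gamma\alpha^{a}=\alpha^{\pm ak}\gamma$ (according as $k^{2}\equiv\pm1$), $\gamma\beta=\beta\gamma$, uses them to shuffle any word into the form $\alpha^{a}\beta^{b}\gamma^{c}$ with $0\le b,c\le1$, sets $c=0$ in the non-vertex-transitive case, and absorbs $\gamma^{2}=\beta$ in the $k^{2}\equiv-1$ case. Where you genuinely go beyond the paper is the uniqueness half: the published proof stops at the normal form and never verifies that the $4n$ (resp.\ $2n$) triples name distinct automorphisms, whereas you pin down $c$ by the concrete rim-swapping invariant (only $\gamma$ exchanges the outer and inner rims, while $\alpha$ and $\beta$ preserve them) and then invoke the standard dihedral normal form in $\langle\alpha,\beta\rangle$ to get $a$ and $b$. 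This is exactly the missing step needed to justify the word ``unique'' in the statement, and your explicit worry about the $k^{2}\equiv-1$ regime --- where $\beta=\gamma^{2}$ could in principle cause the triples to collapse, resolved because $\gamma$ genuinely has order $4$ as a permutation and $2b+c$ ranges bijectively over $\mathbb{Z}_{4}$ --- is well placed; the paper's relation-chasing alone only bounds the group from above. So your argument buys a complete proof of the lemma as stated, at the modest cost of leaving the abstract presentation and checking two invariants on the concrete permutations.
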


\begin{proof}
Let $\gp{n,k}$ be a generalized Petersen graph and let $a,b,c$ be arbitrary integers. Then, by definition of the three generators $\alpha,\beta,\gamma$ (or the three permutations) it clearly holds
\begin{enumerate}
    \item $\beta\alpha^{a}=\alpha^{-a}\beta$, \label{enu:2}
    \item $\gamma\alpha^{a}=\alpha^{ak}\gamma$.  If $k^2 \equiv 1$ mod $n$.\label{enu:1}
    \item $\gamma\alpha^{a}=\alpha^{-ak}\gamma$.  If $k^2 \equiv -1$ mod $n$.\label{enu:4}
    \item $\gamma\beta=\beta\gamma$,\label{enu:3}
\end{enumerate}
We omit the arguments for (\ref{enu:2}) and (\ref{enu:3}) as they are repeated from the definition.
Property (\ref{enu:1}) follows from the facts
 $\alpha\gamma = \gamma\alpha^k$ and $k^2 \equiv 1$ mod $n$. Since  $\alpha^a\gamma = \gamma\alpha^{ak}$ for any $a$,
 take $a = k$ and we get $\alpha^k\gamma = \gamma\alpha^{k^2} = \gamma\alpha$ and the result follows. In a similar way we prove
 (\ref{enu:4}).

 By using the commuting rules (\ref{enu:2}--\ref{enu:3}) above we may transform any product of permutations $\alpha,\beta,\gamma$ to a form $\alpha^a\beta^b\gamma^c$
 with $0\leq b,c\leq 1$.
 In non-vertex-transitive case we have $c = 0$ while in vertex-transitive non-Cayley case, one could have $\gamma, \gamma^2, \gamma^3$. However, we may always
 use the fact that $\gamma^2 = \beta$ and the result follows readily.
\end{proof}

Note that in a bipartite $G(n,k)$ automorphisms $\alpha$ and $\gamma$ are color reversing, while while $\beta$ is color preserving.

\begin{prop}\label{prop:dihedral+}
The following statements hold:
\begin{enumerate}
    \item \label{enu:Dn1}$\alpha^a$ is a Kronecker involution iff. $a=n/2$ and $n\equiv 2 \pmod 4$;
    \item \label{enu:Dn2}$\alpha^a\beta$ is not a Kronecker involution;
    \item \label{enu:Dn3}if $k^2\equiv-1\pmod n$, then neither $\alpha^a\gamma$ nor $\alpha^a\beta\gamma$ is a Kronecker involution, for any admissible $a$.

\end{enumerate}

\end{prop}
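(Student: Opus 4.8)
The plan is to analyze each of the three candidate forms by using the defining properties of a Kronecker involution: it must be (i) fixed-point free, (ii) an involution (so $\omega^2 = 1$), and (iii) color-reversing. Recall from the remark just before the proposition that $\alpha$ and $\gamma$ are color-reversing while $\beta$ is color-preserving, so an element $\alpha^a\beta^b\gamma^c$ is color-reversing exactly when $c$ is odd or (since $\alpha$ contributes a color flip) more precisely when the total parity of color-flipping generators is odd. I would first record this parity bookkeeping explicitly, since it drives the whole argument.

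For part (\ref{enu:Dn1}), I would compute the action of $\alpha^a$ directly on vertices: $\alpha^a(u_i) = u_{i+a}$, which is fixed-point free iff $a \not\equiv 0 \pmod n$, and $(\alpha^a)^2 = \alpha^{2a} = 1$ forces $2a \equiv 0 \pmod n$, i.e. $a = n/2$. The color-reversing condition then requires $\alpha^{n/2}$ to swap the two parts of the bipartition; tracing how $\alpha$ shifts the natural 2-coloring of the bipartite $G(n,k)$ shows this happens precisely when $n/2$ is odd, that is $n \equiv 2 \pmod 4$. For part (\ref{enu:Dn2}), I would show $\alpha^a\beta$ always has a fixed point: since $\beta(u_i) = u_{-i}$, the equation $\alpha^a\beta(u_i) = u_{a-i} = u_i$ is solvable for $i$ (take $2i \equiv a$, which always has a solution when one allows both $u$- and $v$-vertices, or directly exhibit a fixed index), so the fixed-point-free requirement fails regardless of $a$.

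Part (\ref{enu:Dn3}) is where I expect the **main obstacle**, because it requires using the order-four structure of $\gamma$ in the case $k^2 \equiv -1 \pmod n$, where $\gamma^2 = \beta \neq 1$. The key step is the commutation rule (\ref{enu:4}), $\gamma\alpha^a = \alpha^{-ak}\gamma$, which I would use to compute $(\alpha^a\gamma)^2 = \alpha^a\gamma\alpha^a\gamma = \alpha^a\alpha^{-ak}\gamma^2 = \alpha^{a(1-k)}\beta$. For this to be the identity we would need $\alpha^{a(1-k)}\beta = 1$, which is impossible since $\beta \neq 1$ and no power of $\alpha$ equals $\beta$ (they are distinct group elements by the canonical-form uniqueness of Lemma~\ref{rem:autom-shuffle}); hence $\alpha^a\gamma$ is never an involution. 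The same computation for $\alpha^a\beta\gamma$ gives $(\alpha^a\beta\gamma)^2 = \alpha^{a(1-k)}\beta^2\gamma^2 \cdot(\text{sign corrections}) $, which I would reduce using $\beta\gamma = \gamma\beta$ and $\gamma^2 = \beta$ to again land on a nontrivial element, so it too fails to be an involution.

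The delicate point throughout is keeping the commutation bookkeeping correct in the $k^2 \equiv -1$ regime, since $\gamma$ has order $4$ rather than $2$; the uniqueness of the canonical form $\alpha^a\beta^b\gamma^c$ from Lemma~\ref{rem:autom-shuffle} is exactly what lets me conclude that an expression like $\alpha^{a(1-k)}\beta$ cannot collapse to the identity. Once the square of each candidate is pinned down in canonical form, the failure of the involution condition (for the $\gamma$-cases) or the fixed-point-free condition (for $\alpha^a\beta$) is immediate, so the conclusion follows without further case analysis.
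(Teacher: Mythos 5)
Your treatments of parts (1) and (3) essentially reproduce the paper's own arguments: for (1) the involution and color-reversal conditions force $a=n/2$ with $n/2$ odd, and for (3) the computation $(\alpha^a\gamma)^2=\alpha^{a(1-k)}\gamma^2=\alpha^{a(1-k)}\beta\neq 1$, together with its analogue for $\alpha^a\beta\gamma$, is exactly how the paper rules these out, using $\gamma^2=\beta$ and the uniqueness of the normal form $\alpha^a\beta^b\gamma^c$. The ``sign corrections'' you leave unspecified do work out, but you should carry them through: $\beta\gamma\alpha^a\beta\gamma=\alpha^{ak}\beta\gamma\beta\gamma=\alpha^{ak}\beta^2\gamma^2=\alpha^{ak}\beta$, so $(\alpha^a\beta\gamma)^2=\alpha^{a(1+k)}\beta\neq 1$.

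The genuine gap is in part (2). You claim $\alpha^a\beta$ always has a fixed vertex because $2i\equiv a\pmod n$ is always solvable. It is not: here $\gp{n,k}$ is bipartite, so $n$ is even, and color-reversal forces $a$ to be odd, in which case $2i\equiv a\pmod n$ has no solution at all --- neither on the $u_i$ nor on the $v_i$, since both give the same congruence. So in the only case you actually need to exclude, $\alpha^a\beta$ \emph{is} fixed-point free on vertices, it is an involution, and it is color-reversing, so your argument proves nothing. The correct obstruction, and the one the paper uses, is that $\alpha^a\beta$ with $a$ odd inverts an edge: taking $i=\frac{a-1}{2}$, the map sends $u_i\mapsto u_{i+1}$ and $u_{i+1}\mapsto u_i$, so the outer-rim edge $u_iu_{i+1}$ is fixed setwise. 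Such an automorphism cannot be the deck transformation of a Kronecker cover of a simple graph (the contracted quotient would carry a loop), which is the sense in which ``Kronecker involution'' must be read throughout the paper. Without replacing your fixed-vertex claim by this fixed-edge argument, part (2) is unproved.
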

\begin{proof} We prove the claims separately.

(\ref{enu:Dn1})
Let $\omega=\alpha^a$ be a Kronecker involution. It is clear that $\omega$ does not fix any edge, and since $\omega$ is involution we trivially have $a=\frac{n}{2}$.
But since $\omega$ is must be color-reversing, $a$ must at the same time be odd, hence the conclusion.

(\ref{enu:Dn2})
Let $\omega=\alpha^a\beta$ be a Kronecker involution. Since $\omega$ is color-reversing, $a$ must be odd. Letting $i=\frac{a-1}{2}$, it is enough to observe that an edge $u_iu_{i+1}$ is fixed by $\omega$.

(\ref{enu:Dn3})
In both cases, the resulting squared permutation can be written in form $\alpha^{a'}\beta$,
which contradicts the fact that the original permutation is an involution.

\end{proof}

In every generalized Petersen graph $G(n,k)$ permutations $\alpha$ and $\beta$ are automorphisms. Moreover, they generate
the dihedral group $D_n$ of order $2n$ of automorphisms which is, in general, a subgroup of the full automorphism group $A(n,k)$.
The two vertex orbits under $D_n$ are exactly the outer rim and the inner rim and the three edge orbits are outer-rim, inner-rim and
the spokes. Clearly, Proposition~\ref{prop:dihedral+} deals with Kronecker involutions
from $D_n$ and in particular implies the condition for $\gp{n,k}$ being Kronecker cover
described in \ref{thm1:non-vt} of Theorem~\ref{thm1}. But additional Kronecker involutions
may exist by the fact that the automorphism group of a generalized Petersen graph may be larger
then $D_n$. In the next subsection we describe these additional Kronecker involutions, which may (see  (\ref{enu:Dn3}) of Proposition~\ref{prop:dihedral+}) only happen with $k^2\equiv 1 \pmod n$.

\subsection{Additional Kronecker involutions with $k^2\equiv 1 \pmod n$}
In what follows, we assume $k^2 \equiv 1 \pmod n$ and define $Q$, such that $k^2-1= Qn$. The only two permissible types of involutions are $\alpha^a\gamma$ and $\alpha^a\beta\gamma$.

For an integer $i$ let $b(i)$ correspond to the uniquely defined maximal integer, such that $2^{b(i)}$ divides $i$. In particular, we have
\begin{align}\label{b(n)}
b(n)=b(k+1)+b(k-1)-b(Q).
\end{align}

In the following two subsections, we the condition for a generalized Petersen graph being a Kronecker cover, described in (\ref{thm1:b1} and (\ref{thm1:b2} of Theorem~\ref{thm1}, respectively.

\subsubsection*{Involutions of type $\alpha^a\gamma$}

We have $\omega_a=\alpha^a\gamma$ such that $\omega_a(v_i)=u_{ki+a}$ and
$\omega_a(u_i)=v_{ki+a}$, so let us for easier notation define a function
$\Omega_a:\mathbb{Z}_n\rightarrow \mathbb{Z}_n$ such that $\Omega_a(i)=ki+a$.
By these definitions, we clearly have the following properties:
\begin{enumerate}[label=$\mathrm P\arabic*.$]
    \item Permutation $\omega_a$ is color reversing if and only if $\Omega_a(i)\equiv i\pmod 2$, in other words if $a$ is even and $k$ is odd. \label{prop1:even-a}
    \item Permutation $\omega_a$ is an involution if and only if $\Omega_a(\Omega_a(i)) \equiv i \pmod n$, i.e. if $ak+a\equiv 0 \pmod n$.\label{prop2}
    \item Permutation $\omega_a$ may only fix a spoke. In particular, $\alpha_a$ fixes some edge if and only if there exists an integer $i$, such that
    $\Omega_a(i)\equiv i\pmod n$.
\end{enumerate}
Finally, let us define a constant $a_{\min}=\frac{n}{\gcd(n,k+1)}$.
The next lemma describes necessary conditions for $\omega_a$ to be a Kronecker involution.
\begin{lem}\label{nec-conditions}
Let $\omega_a$ be a Kronecker involution. Then the following claims are true:
\begin{enumerate}[label=$(C_\arabic*)$]
    \item \label{2a}$\omega_{2a}$ is not a Kronecker involution;
    \item \label{amin}there exists an odd integer $s$, such that  $a=sa_{\min}$;
    \item \label{amin-even} $a_{\min}$ is even;
    \item \label{oddQ} $Q$ is even;
    \item \label{k=1mod4} $k\equiv 1 \pmod 4$.

\end{enumerate}
\end{lem}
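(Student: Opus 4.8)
The plan is to translate the three defining requirements of a Kronecker involution into arithmetic, and then extract all five conclusions from a single $2$-adic accounting. First I would record, using P1--P3, that since $\omega_a$ is a Kronecker involution it is color reversing (so $a$ is even), it is an involution (so $a(k+1)\equiv 0\pmod n$), and it fixes no edge (so $\gcd(n,k-1)\nmid a$). A routine $\gcd$ cancellation turns the involution condition into $a_{\min}\mid a$: writing $n=\gcd(n,k+1)\,a_{\min}$ and dividing out the common factor, $a(k+1)\equiv 0\pmod n$ is equivalent to $a_{\min}\mid a$. Thus I may already write $a=s\,a_{\min}$, which is half of \ref{amin}; it remains to force $s$ odd, and this will fall out at the end.

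The crux --- and the step I expect to be the main obstacle --- is to show that the edge-fixing obstruction $\gcd(n,k-1)\nmid a$ is \emph{purely $2$-adic}. Since $k$ is odd, $\gcd(k-1,k+1)=2$, so the odd parts of $k-1$ and $k+1$ are coprime; combined with $n\mid(k-1)(k+1)$ this shows that for each odd prime $p\mid n$ we have $v_p(n)\le v_p(k-1)+v_p(k+1)$ with at most one summand nonzero, so the odd part of $n$ factors as the product of the odd part of $\gcd(n,k-1)$ and the odd part of $\gcd(n,k+1)$, these two factors being coprime. Dividing by $\gcd(n,k+1)$, the odd part of $a_{\min}=n/\gcd(n,k+1)$ equals the odd part of $\gcd(n,k-1)$. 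Because $a_{\min}\mid a$, the odd part of $\gcd(n,k-1)$ therefore divides $a$; together with $\gcd(n,k-1)\nmid a$ this forces the failure to sit at the prime $2$, i.e. $b(\gcd(n,k-1))>b(a)$.

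From here everything is $2$-adic bookkeeping. As $a$ is even, $b(a)\ge 1$, so $b(\gcd(n,k-1))=\min(b(n),b(k-1))\ge 2$. Since $k$ is odd, exactly one of $b(k-1),b(k+1)$ equals $1$; if $k\equiv 3\pmod 4$ then $b(k-1)=1$ would give $b(\gcd(n,k-1))=1$, a contradiction, so $k\equiv 1\pmod 4$, proving \ref{k=1mod4}, and then $b(k+1)=1$. The same inequality gives $b(n)\ge 2$, whence $b(a_{\min})=b(n)-b(\gcd(n,k+1))=b(n)-1\ge 1$, so $a_{\min}$ is even, proving \ref{amin-even}. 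Feeding $b(k+1)=1$ into the identity $b(n)=b(k+1)+b(k-1)-b(Q)$ yields $b(Q)=1+b(k-1)-b(n)$; now combining $b(a)\ge b(a_{\min})=b(n)-1$ with $b(a)<\min(b(n),b(k-1))$ rules out $b(n)>b(k-1)$ (it would force $b(a)\le b(k-1)-1<b(n)-1$), so the minimum is $b(n)$ and $b(Q)\ge 1$, proving \ref{oddQ}. This same pinch gives $b(a)=b(n)-1=b(a_{\min})$, hence $b(s)=0$ and $s$ is odd, completing \ref{amin}.

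Finally, for \ref{2a} I would observe that $\omega_{2a}=\alpha^{2a}\gamma$ is again color reversing and an involution (doubling preserves $a(k+1)\equiv 0\pmod n$), so by P3 it is a Kronecker involution precisely when $\gcd(n,k-1)\nmid 2a$. But $b(2a)=b(a)+1=b(n)=b(\gcd(n,k-1))$, while the odd part of $\gcd(n,k-1)$ divides $a$ and hence $2a$; matching valuations at every prime gives $\gcd(n,k-1)\mid 2a$, so $\omega_{2a}$ fixes a spoke and is not a Kronecker involution, establishing \ref{2a}.
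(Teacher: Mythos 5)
Your proof is correct, and it takes a genuinely different route from the paper's. The paper handles the five claims with separate ad hoc arguments: \ref{2a} by checking that $a$ itself is a fixed point of $\Omega_{2a}$ (a one-liner), \ref{amin} by elementary divisibility plus \ref{2a}, \ref{oddQ} by exhibiting the explicit fixed spoke $r=\tfrac{k+a+1}{2}$ under the assumption that $Q$ is odd, and \ref{k=1mod4} by a $2$-adic valuation computation showing $a_{\min}$ would be odd. Your argument instead isolates one structural fact the paper never states: because the odd parts of $k-1$ and $k+1$ are coprime and $n\mid k^2-1$, the odd part of $\gcd(n,k-1)$ equals the odd part of $a_{\min}$ and hence divides every admissible $a$, so the no-fixed-spoke condition $\gcd(n,k-1)\nmid a$ collapses to the single $2$-adic inequality $b(\gcd(n,k-1))>b(a)$. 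All five claims then fall out of one valuation bookkeeping, and you even extract the sharper conclusion $b(a)=b(n)-1=b(a_{\min})$, which is what forces $s$ odd. What your approach buys is uniformity and an explanation of \emph{why} only the prime $2$ matters; it also makes the sufficiency direction (Proposition~\ref{prop:sufficient}) nearly automatic, since the same valuation count shows the inequality holds for every odd $s$. What the paper's approach buys is brevity on \ref{2a} (your derivation of \ref{2a} is the most roundabout part of your write-up, since it depends on the full $2$-adic analysis, whereas the paper's is a direct two-line computation) and avoidance of the odd-part factorization argument. One presentational remark: you derive the claims in the order \ref{k=1mod4}, \ref{amin-even}, \ref{oddQ}, \ref{amin}, \ref{2a}, the reverse of the paper's; that is fine logically, but worth flagging since the paper's later sections cite these items individually.
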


\begin{proof} We prove the claims consecutively.

\ref{2a}
Since $\omega_a$ is an involution, by (\ref{prop2}) we have $a+ka\equiv0\pmod n$ and
$$
\Omega_{2a}(a)\equiv (ak+a)+a\equiv a \pmod n,
$$
hence $a$ is a fixed point.

\ref{amin}
From (\ref{prop2}) it follows that $a(k+1)$ is a multiple of $n$. In other words, there exists
a positive integer $C$, such that $a=\frac{Cn}{k+1}$. It is clear that $a$ is minimized whenever $Cn=\lcm(k+1,n)$, i.e.
$$
a_{\min}=\frac{\lcm(k+1,n)}{k+1}=\frac{n}{\gcd(n,k+1)}.
$$
Note that in general $C$ may be some $s$-th multiple of $\lcm(k+1,n)$, however the value of $s$ from our claim may by \ref{2a} only be odd.

\ref{amin-even} The claim follows from (\ref{prop1:even-a}) and \ref{amin} by the fact that $a$ is
even if and only if $a_{\min}$ is even.

\ref{oddQ}
Suppose that $Q$ is odd, which implies that $b(Q)=0$ and $b(n)> b(k+1)$.
By \ref{amin} we have that $a=sa_{\min}$, for some odd $s$.
We will show that $\Omega_a$ have a fixed point in
$r=\frac{k+a+1}{2}$. Indeed, we have
\begin{align*}
\Omega_a(r)-r&\equiv \left ( \frac{k+1}{2}+\frac{a}{2}\right )(k-1)+a\\
& \equiv \frac{Qn}{2}+\frac{k+1}{2}a\equiv
\frac{n}{2}+\frac{s\cdot \lcm(n,k+1)}{2}\pmod n.
\end{align*}
To conclude the proof it is enough to notice that $\lcm(n,k+1)$ is an odd multiple of $n$ by the fact that $b(n)> b(k+1)$.

\ref{k=1mod4}
By contradiction assume that $k+1 \equiv 0 \pmod 4$. We will show that in such case
we have $b(n)=b(\gcd(n,k+1))$, implying that $a_{\min}$ is odd. Let $t=b(n)$ and $s=b(k+1)$ and note that by \ref{oddQ} we have
$$
t=s+b(k-1)-b(Q)\leq s+b(k-1)-1=s,
$$
hence there exist odd integers $o_1,o_2$ such that $n=o_1 2^t$ and $k+1=o_2 2^s$, and
\begin{align*}
a_{\min}&= \frac{n}{\gcd(n,k+1)}
=\frac{o_12^t}{\gcd(o_1 2^t,o_2 2^s)}\\
&=\frac{o_1}{\gcd(o_1,o_2 2^{s-t})}\equiv 1\pmod 2.
\end{align*}
As desired, we are in contradiction with \ref{amin-even}.
\end{proof}

We conclude the proof of condition (\ref{thm1:b1} of Theorem~\ref{thm1} by showing that the necessary conditions \ref{oddQ} and \ref{k=1mod4} mentioned in Lemma~\ref{nec-conditions} are in fact sufficient.

\begin{prop}\label{prop:sufficient}
Let $Q$ be even and $k\equiv 1 \pmod 4$. Then $\omega_{a}$ is a Kronecker involution for any $a=sa_{\min}$ and odd $s$.
\end{prop}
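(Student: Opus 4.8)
The plan is to verify the two defining properties of a Kronecker involution directly for $\omega_a=\alpha^a\gamma$ under the hypotheses $Q$ even and $k\equiv 1\pmod 4$, for every $a=s\,a_{\min}$ with $s$ odd. Recall that a Kronecker involution must be (i) color-reversing, (ii) an involution, and (iii) fixed-point free. The first two are the easy half. By property \ref{prop1:even-a}, $\omega_a$ is color-reversing exactly when $a$ is even and $k$ is odd; since $k\equiv 1\pmod 4$ forces $k$ odd, and since $a=s\,a_{\min}$ with $a_{\min}$ even (which I would re-derive here: $k\equiv 1\pmod 4$ combined with $Q$ even gives $b(n)>b(k+1)$ via equation \eqref{b(n)}, hence $a_{\min}=n/\gcd(n,k+1)$ is even), the product $a=s\,a_{\min}$ is even. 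For the involution property, property \ref{prop2} requires $a(k+1)\equiv 0\pmod n$; but $a=s\,a_{\min}=s\,n/\gcd(n,k+1)$, so $a(k+1)=s\,n(k+1)/\gcd(n,k+1)=s\,n\cdot\mathrm{lcm}(n,k+1)/((k+1)\cdot\ldots)$ is visibly a multiple of $n$ because $\mathrm{lcm}(n,k+1)=n(k+1)/\gcd(n,k+1)$ is a multiple of $n$. So both (i) and (ii) are immediate from the arithmetic already set up.

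The substantive step, which I expect to be the main obstacle, is establishing fixed-point freeness (iii). By property P3, $\omega_a$ can only fix a spoke, and it does so precisely when $\Omega_a(i)\equiv i\pmod n$ has a solution, i.e.\ when $(k-1)i+a\equiv 0\pmod n$ is solvable in $i$. This linear congruence has a solution if and only if $\gcd(n,k-1)\mid a$. So the goal reduces to proving $\gcd(n,k-1)\nmid a$, i.e.\ that $\gcd(n,k-1)\nmid s\,a_{\min}$ for every odd $s$. The cleanest route is a $2$-adic valuation argument using the $b(\cdot)$ notation already in play: I would show that $b\bigl(\gcd(n,k-1)\bigr)>b(a)$, which forces $\gcd(n,k-1)\nmid a$. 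Since $a=s\,a_{\min}$ with $s$ odd, $b(a)=b(a_{\min})=b(n)-b(\gcd(n,k+1))$. Here $k\equiv 1\pmod 4$ gives $b(k-1)\ge 2$ and $b(k+1)=1$, so $b(\gcd(n,k+1))=\min(b(n),1)$, and because $b(n)>b(k+1)=1$ (as shown above) this minimum is $1$, whence $b(a_{\min})=b(n)-1$. On the other side, $b\bigl(\gcd(n,k-1)\bigr)=\min(b(n),b(k-1))$; combining with $b(n)=b(k+1)+b(k-1)-b(Q)=1+b(k-1)-b(Q)$ and $Q$ even (so $b(Q)\ge 1$) should yield $b(n)\le b(k-1)$, making $b(\gcd(n,k-1))=b(n)$. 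Then $b(\gcd(n,k-1))=b(n)>b(n)-1=b(a)$, exactly as needed.

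Having ruled out fixed points, all three properties hold and $\omega_a$ is a Kronecker involution, which completes the proof of Proposition~\ref{prop:sufficient} and, together with the necessity direction of Lemma~\ref{nec-conditions}, finishes case \ref{thm1:b1} of Theorem~\ref{thm1}. The only real care required is the bookkeeping of $2$-adic valuations: I must be careful that the inequality $b(n)\le b(k-1)$ genuinely follows from $Q$ even and $k\equiv 1\pmod 4$, and that no degenerate small case (for instance $k-1=0$, which does not occur since $k\ge 1$ and we are in the regime $k>1$ for the nontrivial cases) breaks the valuation computation. I would double-check the boundary behavior of $\min$ in $b(\gcd(n,k\pm1))$ to ensure the strict inequality $b(\gcd(n,k-1))>b(a)$ is never an equality, since an equality would leave open the possibility of divisibility and hence a fixed point.
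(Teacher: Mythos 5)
Your proof is correct, and for the one substantive step it takes a genuinely different route from the paper's. The involution and color-reversing parts coincide with the paper's argument. For fixed-point freeness the paper first derives the identity $a_{\min}=\frac{k-1}{\gcd(k-1,Q)}$ (its equation (\ref{eq:alt-notion-amin})), substitutes it into $i(k-1)+a\equiv 0\pmod n$, cancels $a_{\min}$, and reads off the contradiction $i\cdot\gcd(k-1,Q)+s\equiv 0\pmod{\gcd(n,k+1)}$, whose left-hand side is odd while the modulus is even. You instead invoke the solvability criterion for linear congruences, reducing the claim to $\gcd(n,k-1)\nmid a$, and settle that by the valuation comparison $b\left(\gcd(n,k-1)\right)=b(n)>b(n)-1=b(a)$, where $b(n)\leq b(k-1)$ follows from (\ref{b(n)}) together with $b(Q)\geq 1$; I checked each of these steps and they hold (including the degenerate case $k=1$, where $\gcd(n,k-1)=n$). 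Both arguments are at bottom the same $2$-adic obstruction, but yours bypasses equation (\ref{eq:alt-notion-amin}) entirely and is arguably more transparent, while the paper's version has the advantage of transcribing almost verbatim to the $\alpha^a\beta\gamma$ case. One shared caveat: your parenthetical claim that $Q$ even and $k\equiv 1\pmod 4$ force $b(n)>b(k+1)$, hence $a_{\min}$ even, does not actually follow from (\ref{b(n)}) alone --- for instance $(n,k)=(210,29)$ has $Q=4$ even and $k\equiv 1\pmod 4$, yet $b(n)=b(k+1)=1$ and $a_{\min}=7$ is odd --- so one must import $4\mid n$ from the ambient case \ref{thm1:vt} of Theorem~\ref{thm1} (or from condition \ref{amin-even} of Lemma~\ref{nec-conditions}). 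The paper's own proof makes the identical unstated assumption when it asserts that $\gcd(n,k+1)\equiv 2\pmod 4$ implies $a_{\min}$ is even, so this is not a defect of your argument relative to the paper's.
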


\begin{proof}
We consequently prove that $\omega_a$ is an involution, that it is color-reversing,
and that it does not fix any edge.
Observe that $\omega_a$ is an involution by the fact that
$$
(k+1)a\equiv (k+1)s\cdot \frac{\lcm(n,k+1)}{k+1}\equiv 0 \pmod n.
$$
Notice that  $k \equiv 1 \pmod 4$ implies $\gcd(n,k+1) \equiv 2 \pmod 4$
hence $a_{\min}$ and $a$ are even which implies that $\omega_a$ is color-reversing.
Using $(k+1)(k-1)=Qn$, it follows
\begin{align}
a_{\min}=\frac{Qn}{Q\gcd(n,k+1)}=
\frac{(k+1)(k-1)}{(k+1)\cdot\gcd(k-1,Q)}=\frac{k-1}{\gcd(k-1,Q)}.\label{eq:alt-notion-amin}
\end{align}
Now suppose that $\omega_a$ fixes a spoke $u_iv_i$. Then
\begin{align*}
0 &\equiv i(k-1)+a \pmod n\\
  &\equiv i\cdot a_{\min}\cdot \gcd(k-1,Q)+s\cdot a_{\min} \pmod n\\
  &\equiv a_{\min}(i\cdot \gcd(k-1,Q) +s) \pmod n,
\end{align*}
which is equivalent to
\begin{align}
i\cdot \gcd(k-1,Q) +s \equiv 0 \pmod {\gcd(n,k+1)}\label{eq:contr:1}
\end{align}
by the fact that $a_{\min}\mid n$. But (\ref{eq:contr:1}) is a contradiction since clearly both $\gcd(n,k+1)$ and $\gcd(Q,k-1)$ are even while $s$ is odd.
\end{proof}

\subsubsection*{Involutions of type $\alpha^a\beta\gamma$}

In this section we focus on Kronecker involutions  that also include reflection $\beta$.
While this fact requires some adjustments by the fact that we are now
considering involutions of type $\alpha^a\beta\gamma$, the subsection is
mostly a compact transcript of the previous one.

Define $\omega'_a=\alpha^a\beta\gamma$ i.e. $\omega'_a(v_i)=u_{a-ki}$ and
$\omega'_a(u_i)=v_{a-ki}$, and let
$\Omega'_a:\mathbb{Z}_n\rightarrow \mathbb{Z}_n$ be a function defined as $\Omega'_a(i)=a-ki$.
In this case, the requirements for the $\omega'_a$ being
Kronecker involution imply:
\begin{enumerate}
    \item Permutation $\omega'_a$ is color reversing if and only if $a$ is even and $k$ is odd.
    \item Permutation $\omega'_a$ is an involution if and only if $a-ak\equiv 0 \pmod n$.
    \item Permutation $\omega'_a$ may only fix an $i$-th spoke if and only if there exists an integer $i$, such that $\Omega'_a(i)\equiv i\pmod n$.
\end{enumerate}
Define a constant $a'_{\min}=\frac{n}{\gcd(n,n-k+1)}$, and note the following necessary conditions for $\omega'_a$ to be a Kronecker involution.
\begin{lem}\label{_nec-conditions}
Let $\omega'_a$ be a Kronecker involution. Then the following claims are true:
\begin{enumerate}[label=$(C'_\arabic*)$]
    \item \label{_2a}$\omega_{2a}$ is not a Kronecker involution;
    \item \label{_amin}there exists an odd integer $s$, such that  $a=sa_{\min}$;
    \item \label{_amin-even} $a_{\min}$ is even;
    \item \label{_oddQ} $Q$ is even;
    \item \label{_k=1mod4} $k\equiv -1 \pmod 4$.
\end{enumerate}
\end{lem}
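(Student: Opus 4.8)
The plan is to mirror the proof of Lemma~\ref{nec-conditions} almost verbatim, exploiting the structural duality between the two involution types. The essential observation is that replacing $\gamma$ by $\beta\gamma$ turns $\Omega_a(i)=ki+a$ into $\Omega'_a(i)=a-ki$, so the involution criterion becomes $a(k-1)\equiv 0\pmod n$ (instead of $a(k+1)\equiv 0$), while the fixed-spoke criterion $\Omega'_a(i)\equiv i$ now reads $i(k+1)\equiv a\pmod n$. In effect the roles of $k+1$ and $k-1$ are interchanged: since $\gcd(n,n-k+1)=\gcd(n,k-1)$, the modulus governing $a'_{\min}$ now involves $k-1$ and plays the part that $\gcd(n,k+1)$ played before, whereas $k+1$ takes over the role previously played by $k-1$ in the fixed-spoke analysis. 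Because the identity \eqref{b(n)} is symmetric in $k+1$ and $k-1$, the $2$-adic bookkeeping transfers with these two factors swapped, and the only externally visible change is that the conclusion on $k$ flips from $k\equiv 1$ to $k\equiv -1\pmod 4$.

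Concretely, for \ref{_2a} I would repeat the computation of \ref{2a}: from the involution relation $a-ak\equiv 0$ one gets $\Omega'_{2a}(a)=2a-ka\equiv a\pmod n$, so $a$ is a fixed point and $\omega'_{2a}$ fixes the spoke $u_av_a$. For \ref{_amin} the involution relation gives $a(k-1)\equiv 0\pmod n$, whence $a$ is a multiple $s\,a'_{\min}$ of $a'_{\min}=n/\gcd(n,k-1)$; were $s$ even, writing $a=2\bigl((s/2)a'_{\min}\bigr)$ would exhibit $\omega'_a$ as an $\omega'_{2a''}$, contradicting \ref{_2a}, so $s$ is odd. Claim \ref{_amin-even} then follows exactly as in \ref{amin-even}, since colour-reversibility forces $a$ even and $s$ is odd, so $a'_{\min}$ is even.

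The two substantive claims are \ref{_oddQ} and \ref{_k=1mod4}. For \ref{_oddQ} I would argue by contradiction: assuming $Q$ odd gives $b(Q)=0$, so \eqref{b(n)} yields $b(n)=b(k+1)+b(k-1)>b(k-1)$. I would then produce a fixed spoke by setting $r=\tfrac{a-k+1}{2}$, which is an integer since $a$ is even and $k$ is odd, and verifying the exact identity
$$
\Omega'_a(r)-r \;=\; a-r(k+1) \;=\; \frac{Qn}{2}-\frac{a(k-1)}{2},
$$
using $Qn=(k+1)(k-1)$. Taken modulo $n$, the term $\tfrac{Qn}{2}$ is $\equiv \tfrac n2$ because $Q$ is odd, and, writing $a(k-1)=s\,\lcm(n,k-1)$, the condition $b(n)>b(k-1)$ makes $\lcm(n,k-1)$ an odd multiple of $n$, so $\tfrac{a(k-1)}{2}\equiv \tfrac n2$ as well; hence the displayed quantity is $\equiv 0\pmod n$, making $r$ a fixed point and contradicting that $\omega'_a$ is fixed-point free. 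For \ref{_k=1mod4} I would mimic \ref{k=1mod4}: assuming $k-1\equiv 0\pmod 4$ (so $b(k-1)\ge 2$ and $b(k+1)=1$) and invoking the evenness of $Q$ just established, \eqref{b(n)} gives $b(n)\le b(k-1)$, whence $b(\gcd(n,k-1))=b(n)$ and $a'_{\min}$ is odd, contradicting \ref{_amin-even}. Since $k$ is odd, this forces $k\equiv -1\pmod 4$.

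The main obstacle is the $2$-adic valuation bookkeeping rather than any new idea: one must choose the shifted index $r$ correctly so that the two contributions $\tfrac{Qn}{2}$ and $\tfrac{a(k-1)}{2}$ align to cancel modulo $n$, and one must keep careful track of which of $k\pm 1$ carries the surviving factor of $2$ after the swap. Everything else is a mechanical transcription of the unprimed arguments.
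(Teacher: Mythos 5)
Your proposal is correct and follows essentially the same route as the paper: the paper proves $(C'_2)$ and $(C'_4)$ by the same $\gcd$/$\lcm$ and $2$-adic valuation arguments (your $\gcd(n,k-1)$ is the paper's $\gcd(n,n-k+1)$, and your fixed point $r=\tfrac{a-k+1}{2}$ differs from the paper's $r=\tfrac{k-1+a}{2}$ only by $k-1$, which is annihilated by $k+1$ modulo $n$), and explicitly declares $(C'_1)$, $(C'_3)$, $(C'_5)$ to be transcriptions of the unprimed versions, which is exactly what you carry out.
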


\begin{proof} We omit the proofs of \ref{_2a}, \ref{_amin-even} and \ref{_k=1mod4}, since they
may be transcribed from the proofs of \ref{2a}, \ref{amin-even} and \ref{k=1mod4} along the same lines.



\ref{_amin}
Since $a(n-k+1)$ is a multiple of $n$, there exists a constant $C$ such that $a=\frac{Cn}{n-k+1}$. It is clear that $a$ is minimized whenever $Cn=\lcm(n-k+1,n)$, i.e.
$$
a_{\min}=\frac{\lcm(n-k+1,n)}{n-k+1}=\frac{n}{\gcd(n,n-k+1)}.
$$
Again, $C$ may be some multiple of $\lcm(k+1,n)$, however the value of $s$ from our claim may by \ref{_2a} only be odd.


\ref{_oddQ}
Suppose that $Q$ is odd and let $\omega_a$ be such a Kronecker involution.
By (\ref{_amin}) we have that $a=sa'_{\min}$, for some odd $s$. Note that $b(Q)=0$
implies $b(k-1) < b(n)$ and
$$
b(n-(k-1))=\min\left (b(n),b(k-1)\right )=b(k-1),
$$
hence $b(n)> b(n-k+1)$. We will show that $\Omega'_a$ have a fixed point in
$r=\frac{k-1+a}{2}$. Indeed, we have
\begin{align*}
\Omega'_a(r)-r&\equiv a-\left ( \frac{k-1}{2}+\frac{a}{2}\right )(k+1)
\equiv a-\frac{Qn}{2}-\frac{a}{2}\cdot (k+1) \pmod n\\
& \equiv -\frac{Qn}{2}+\frac{a}{2}\cdot (n-k+1)\equiv
\frac{n}{2}-\frac{s\cdot \lcm(n,n-k+1)\cdot(n-k+1)}{2\cdot (n-k+1)}\pmod n \\
&\equiv \frac{n}{2}-\frac{s\cdot \lcm(n,n-k+1)}{2} \pmod n.
\end{align*}
To conclude the proof it is enough to notice that $\lcm(n,n-k+1)$ is an odd multiple of $n$ by the fact that $b(n)\geq b(n-k+1)$.

\end{proof}

The proposition below shows that the necessary conditions mentioned in Lemma~\ref{_nec-conditions} are in fact sufficient, which
proves the condition (\ref{thm1:b2} of Theorem~\ref{thm1}.

\begin{prop}\label{_prop:sufficient}
Let $Q$ be even and $k\equiv 3 \pmod 4$. Then $\omega'_{a}$ is a Kronecker involution for any $a=sa'_{\min}$ and odd $s$.
\end{prop}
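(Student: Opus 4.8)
The plan is to mirror the proof of Proposition~\ref{prop:sufficient} almost verbatim, swapping the roles of $k+1$ and $k-1$ and accounting for the sign change in the spoke-fixing condition. As there, I would verify three things in turn: that $\omega'_a$ is an involution, that it is color-reversing, and that it fixes no edge. Throughout, recall $a'_{\min}=\frac{n}{\gcd(n,n-k+1)}$, and note that $\gcd(n,n-k+1)=\gcd(n,k-1)$ since $n-k+1=n-(k-1)$.

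For the involution property I would use the listed criterion $a-ak\equiv 0\pmod n$. Writing $a=sa'_{\min}=s\,\lcm(n,n-k+1)/(n-k+1)$ and using $1-k\equiv n-k+1\pmod n$, the product satisfies $(n-k+1)a\equiv s\,\lcm(n,n-k+1)\equiv 0\pmod n$, so $\omega'_a$ is an involution. For the color-reversing property, the key observation is that $k\equiv 3\pmod 4$ forces $n-k+1\equiv 2\pmod 4$, whence $b(\gcd(n,n-k+1))=\min(b(n),1)=1$ (using $n\equiv 0\pmod 4$). Therefore $b(a'_{\min})=b(n)-1\geq 1$, so $a'_{\min}$, and hence $a=sa'_{\min}$, is even; since $k$ is odd, this makes $\omega'_a$ color-reversing.

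The heart of the argument, exactly as in the unprimed case, is showing that $\omega'_a$ fixes no edge. Since $\omega'_a$ can only fix a spoke, suppose it fixes $u_iv_i$; then $\Omega'_a(i)\equiv i$ gives $i(k+1)\equiv a\pmod n$. Here I would first establish the alternative expression $a'_{\min}=\frac{k+1}{\gcd(k+1,Q)}$, the analog of~(\ref{eq:alt-notion-amin}), which follows from $Qn=(k-1)(k+1)$ together with the identity $Q\gcd(n,k-1)=(k-1)\gcd(k+1,Q)$. Substituting $k+1=a'_{\min}\gcd(k+1,Q)$ and $a=sa'_{\min}$ yields
\[
0 \equiv i(k+1)-a \equiv a'_{\min}\bigl(i\gcd(k+1,Q)-s\bigr)\pmod n,
\]
and, dividing through by $a'_{\min}\mid n$,
\[
i\gcd(k+1,Q)-s\equiv 0\pmod{\gcd(n,n-k+1)}.
\]
This is impossible: both $\gcd(n,n-k+1)$ and $\gcd(k+1,Q)$ are even (the latter because $Q$ is even and $k+1$ is even), while $s$ is odd, so the left-hand side is odd modulo an even number. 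The contradiction shows no spoke is fixed, which together with the first two steps gives that $\omega'_a$ is a Kronecker involution, matching the necessary conditions of Lemma~\ref{_nec-conditions}.

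The only genuinely new bookkeeping, and the step I expect to be the main obstacle, is confirming the alternative form $a'_{\min}=\frac{k+1}{\gcd(k+1,Q)}$, since the spoke-fixing relation now carries the factor $k+1$ rather than $k-1$; once that identity is in hand, the parity contradiction transfers unchanged. Everything else is a direct transcription of Proposition~\ref{prop:sufficient}.
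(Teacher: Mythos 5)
Your proposal is correct and follows essentially the same route as the paper: verify the involution and color-reversing properties, derive an alternative expression for $a'_{\min}$ from $Qn=(k+1)(k-1)$, and obtain a parity contradiction from the spoke-fixing congruence. The only cosmetic difference is that you write the key identity as $a'_{\min}=\frac{k+1}{\gcd(k+1,Q)}$ while the paper uses $\frac{k+1}{\gcd(k+1,k+1-Q)}$; these coincide since $\gcd(k+1,k+1-Q)=\gcd(k+1,Q)$, so the contradiction argument is identical.
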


\begin{proof}
We consequently prove that $\omega'_a$ is an involution, that it is color-reversing,
and that it does not fix any edge.
Observe that $\omega'_a$ is an involution by the fact that
$$
(1-k)a\equiv (n+1-k)s\cdot \frac{\lcm(n-k+1,n)}{n-k+1}\equiv 0 \pmod n.
$$
Notice that  $k \equiv -1 \pmod 4$ implies $\gcd(n,n-k+1) \equiv 2 \pmod 4$
hence $a_{\min}$ and $a$ are even which implies that $\omega_a$ is color-reversing.
Using $(k+1)(k-1)=Qn$, it follows
\begin{align}
a'_{\min}&=\frac{Qn}{Q\cdot \gcd(n,n-k+1)}
=\frac{(k+1)(k-1)}{\gcd\left (Qn,(k+1-Q)(k-1)\right )}
\nonumber \\
&=\frac{(k+1)(k-1)}{(k-1)\cdot\gcd(k+1,k+1-Q)}
=\frac{k+1}{\gcd(k+1,k+1-Q)}.
\label{_eq:alt-notion-amin}
\end{align}
Now suppose that $\omega'_a$ fixes a spoke $u_iv_i$ and, for easier notation,
let $g=\gcd(k+1,k+1-Q)$. Then
\begin{align*}
0 &\equiv a-i(k+1) \pmod n\\
  &\equiv s\cdot a'_{\min}-i\cdot a'_{\min}\cdot g  \pmod n\\
  &\equiv a'_{\min}(s-i\cdot g) \pmod n,
\end{align*}
which is equivalent to
\begin{align}
s-i\cdot g \equiv 0 \pmod {\gcd(n,n-k+1)}\label{_eq:contr:1}
\end{align}
by the fact that $a'_{\min}\mid n$. But (\ref{_eq:contr:1}) is a contradiction since clearly both $\gcd(n,n-k+1)$ and $\gcd(k+1,k+1-Q)$ are even while $s$ is odd.
\end{proof}

We conclude with an important corollary that holds for Kronecker involutions of both types.

\begin{cor}
If $Q$ is even
then $\omega_{n/2}$ is a Kronecker involution.
\end{cor}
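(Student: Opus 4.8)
The plan is to reduce the corollary directly to the two sufficiency results already in hand, Proposition~\ref{prop:sufficient} and Proposition~\ref{_prop:sufficient}. Since a generalized Petersen graph is a Kronecker cover only if it is bipartite, I may assume $n$ even and $k$ odd, so exactly one of $k\equiv1\pmod4$ or $k\equiv3\pmod4$ holds; these correspond respectively to involutions $\omega_a=\alpha^a\gamma$ with admissible $a=s\,a_{\min}$ ($s$ odd) and $\omega'_a=\alpha^a\beta\gamma$ with $a=s\,a'_{\min}$ ($s$ odd). The whole task therefore comes down to exhibiting, in each residue class, an \emph{odd} integer $s$ for which $s\,a_{\min}=n/2$ (resp. $s\,a'_{\min}=n/2$); the relevant proposition then immediately certifies that $\omega_{n/2}$ is a Kronecker involution, using the hypothesis that $Q$ is even.

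For $k\equiv1\pmod4$ I would use that $k+1\equiv2\pmod4$ together with $n$ even to get $b(\gcd(n,k+1))=\min(b(n),b(k+1))=1$, i.e. $\gcd(n,k+1)\equiv2\pmod4$ --- the very fact recorded inside Proposition~\ref{prop:sufficient}. Taking $s:=\gcd(n,k+1)/2$, which is then odd, and recalling $a_{\min}=n/\gcd(n,k+1)$, gives
$$
s\,a_{\min}=\frac{\gcd(n,k+1)}{2}\cdot\frac{n}{\gcd(n,k+1)}=\frac{n}{2},
$$
so $a=n/2$ is admissible and Proposition~\ref{prop:sufficient} applies.

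The case $k\equiv3\pmod4$ is entirely parallel, with $a'_{\min}=n/\gcd(n,n-k+1)$, and I expect its only delicate point --- the main (though minor) obstacle --- to be verifying $\gcd(n,n-k+1)\equiv2\pmod4$. Writing $n-k+1=n-(k-1)$ with $b(k-1)=1$, I would split on $b(n)$: if $b(n)=1$ both terms have $2$-adic valuation $1$ so their gcd does too, while if $b(n)\ge2$ then $b(n-(k-1))=b(k-1)=1$; either way $b(\gcd(n,n-k+1))=1$. Setting $s:=\gcd(n,n-k+1)/2$ (odd) then yields $s\,a'_{\min}=n/2$, and Proposition~\ref{_prop:sufficient} shows $\omega'_{n/2}$ is a Kronecker involution. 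In both residue classes this is the claimed $\omega_{n/2}$, so the corollary follows.
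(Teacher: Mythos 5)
Your proof is correct and follows essentially the same route as the paper's: in each residue class of $k$ modulo $4$ you pick $s=\gcd(n,k+1)/2$ (resp.\ $s=\gcd(n,n-k+1)/2$), check it is odd, note $s\,a_{\min}=n/2$ (resp.\ $s\,a'_{\min}=n/2$), and invoke Propositions~\ref{prop:sufficient} and~\ref{_prop:sufficient}. You even supply slightly more detail than the paper in verifying $\gcd(n,n-k+1)\equiv 2\pmod 4$ (the cleanest phrasing is $\gcd(n,n-k+1)=\gcd(n,k-1)$ with $b(k-1)=1$), which the paper merely asserts.
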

\begin{proof}
Suppose that $k\equiv 1 \pmod 4$ and let $s=\frac{\gcd(n,k+1)}{2}$. Since $k+1\equiv 2 \pmod 4$, clearly also $\gcd(n,k+1)\equiv 2 \pmod 4$ and $s$ is an odd integer. But then $\omega_{s\cdot a_{\min}}$ is a Kronecker involution by Proposition~\ref{prop:sufficient}, and clearly $s\cdot a_{\min}=n/2$.

Similarly, if $k\equiv -1 \pmod 4$, set $s'=\frac{\gcd(n,n-k+1)}{2}$ and
notice that it is an odd integer, while $s'\cdot a'_{\min}=n/2$.
\end{proof}

In the next section we prove that for any member of $\P$ except $\gp{10,3}$, all quotients are
isomorphic, hence it will be conveniant to always (when applicable) use the cannonical value of $a=n/2$.

\section{The quotients of generalized Petersen graphs}\label{sec:quotients}
For a given generalized Petersen graph, so far we identified all its Kronecker involutions.
In this section we determine the structure of the corresponding quotient graphs, for each of these involutions.
Namely, the next two subsections deal with the structural part of the
statements \ref{thm1:non-vt} and \ref{thm1:vt} of the
Theorem~\ref{thm1}, respectively.

\subsection{Involutions of $D_n$}

We already know that the only Kronecker involution in the Dihedral group is the rotation
$\alpha^{n/2}$, which is realized whenever $n\equiv 2 \pmod 4$ and $k$ is odd.
In order to prove \ref{thm1:non-vt} of Theorem~\ref{thm1},
it is enough to show the following proposition,
which describes the corresponding quotient graph explicitly.

\begin{prop}
\label{thm:main}For an odd $n$ and an integer $k<\frac{n}{2}$, we have
\[
\kc{\gp{n,k}}\simeq\begin{cases}
\gp{2n,k}; & \quad k\mbox{ is odd};\\
\gp{2n,n-k}; & \quad k\mbox{ is even.}
\end{cases}
\]
\end{prop}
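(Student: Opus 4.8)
The plan is to exhibit an explicit isomorphism between $\kc{\gp{n,k}}$ and the target graph on $2n$ sectors, built from the fact that $n$ is odd. Since then $\gcd(2,n)=1$, the Chinese Remainder Theorem gives $\mathbb{Z}_{2n}\cong\mathbb{Z}_2\times\mathbb{Z}_n$; I write $j(i,\epsilon)$ for the unique residue in $\mathbb{Z}_{2n}$ with $j\equiv i\pmod n$ and $j\equiv\epsilon\pmod 2$. Denoting the two fibres of a vertex $x$ of $\gp{n,k}$ by $x'$ and $x''$, and the vertices of $\gp{2n,k'}$ by $U_j,V_j$, I would define the candidate isomorphism $\phi$ by $\phi(u_i')=U_{j(i,0)}$, $\phi(u_i'')=U_{j(i,1)}$, $\phi(v_i'')=V_{j(i,0)}$ and $\phi(v_i')=V_{j(i,1)}$. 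The CRT correspondence makes $\phi$ a bijection on the outer halves and on the inner halves separately, hence on the whole vertex set. Note that the outer copies and the inner copies are assigned \emph{opposite} parity conventions (prime$\leftrightarrow$even for $U$, prime$\leftrightarrow$odd for $V$); this is exactly what the bipartite tensor structure of the Kronecker cover forces, and it is what makes the spokes line up.

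First I would verify the three edge classes in turn, which is routine once the labelling is fixed. The spokes are immediate from the definition of $\phi$: the cover-edge $u_i'v_i''$ (resp. $u_i''v_i'$) maps to $U_{j(i,0)}V_{j(i,0)}$ (resp. $U_{j(i,1)}V_{j(i,1)}$), a spoke of $\gp{2n,k'}$. For the outer rim, the Kronecker cover of the odd cycle $\O{n,k}$ is a single $2n$-cycle; concretely one checks $j(i,0)+1=j(i+1,1)$ and $j(i,1)+1=j(i+1,0)$, so each outer cover-edge maps onto a consecutive edge $U_mU_{m+1}$ of the outer $2n$-cycle.

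The genuine content lies in the inner rim, and this is where the case split on the parity of $k$ arises. Applying $\phi$ to a cover-edge $v_i'v_{i+k}''$ produces the pair $V_{j(i,1)}V_{j(i+k,0)}$, whose index difference $d=j(i+k,0)-j(i,1)$ is odd and satisfies $d\equiv k\pmod n$. By CRT these two congruences pin down $d$ in $\mathbb{Z}_{2n}$: if $k$ is odd then $d=k$, so $\phi$ realizes the inner rim of $\gp{2n,k}$; if $k$ is even then the unique odd residue congruent to $k$ modulo $n$ is $n+k$, and since $n+k\equiv-(n-k)\pmod{2n}$ this is precisely the inner rim of $\gp{2n,n-k}$. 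I would also record that $k<\tfrac n2$ forces $0<n-k<n$, so the stated parameter is admissible in canonical form. The main obstacle is exactly this parity bookkeeping: one must track the index modulo $n$ and the primed/double-primed label modulo $2$ simultaneously, and recognize the folding $n+k\equiv-(n-k)$ in the even case. Once the three edge classes are matched, $\phi$ is a vertex-bijection carrying edges to edges between two cubic graphs each with $4n$ vertices and hence $6n$ edges, so $\phi$ is automatically an isomorphism and no separate surjectivity check on the edge set is required.
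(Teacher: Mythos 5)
Your proposal is correct and is essentially the paper's own argument: the paper defines the inverse of your map (sending $a_j\in V(\gp{2n,k'})$ to $u'_j$ or $u''_j$ and $b_j$ to $v''_j$ or $v'_j$ according to the parity of $j$, with indices read modulo $n$ on the cover side) and verifies the same three edge classes, with the same parity analysis on the inner rim producing $k$ versus $n-k$. Your CRT phrasing of the index bookkeeping is just a cleaner packaging of the identical construction.
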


\begin{proof}[Proof of \ref{thm1:non-vt} from Theorem~\ref{thm1}]
Let $G\simeq G(n,k)$ and $G'\simeq\kc G$, for an odd integer $n$
and $k<\frac{n}{2}$.
The edges of $\kc G$ are naturally partitioned to the following three
groups:
\begin{enumerate}[label=(E\arabic*),ref=(E\arabic*)]
\item $u'_{i}v''_{i}$ and $u''_{i}v'_{i}$;\label{enu:spikes}
\item $u'_{i}u''_{i+1}$ and $u''_{i}u'_{i+1}$;\label{enu:outer-rim}
\item $v'_{i}v''_{i+k}$ and $v''_{i}v'_{i+k}$.\label{enu:inner-rims}
\end{enumerate}
For easier notation, define $k'$ to be equal
$k$ or $n-k$, depending on whether $k$ is odd or even, respectively.
Furthermore, let $H\coloneqq\gp{2n,k'}$ and denote its vertex set
with
\[
V(H)=\left\{ a_{0},\dots,a_{2n-1},b_{0},\dots,b_{2n-1}\right\} ,
\]
while its edge set is consisted of edges of form $a_{i}a_{i+1}$,
$a_{i}b_{i}$ and $b_{i},b_{i+k'}$. To show the left implication
of Proposition~\ref{thm:main}, it is enough to show that $G'\simeq H$.
Throughout the proof all subscripts for vertices from~$H$ (on the
left-hand side) are assumed to be modulo $2n$, while all subscripts
for vertices from $G'$ (on the right-hand side) are assumed to be
modulo $n$. To show an equivalence, we introduce a bijection $f:V(H)\rightarrow V(G')$,
such that
\begin{eqnarray*}
a_{i} & \mapsto & \begin{cases}
u'_{i} & \quad\mbox{if }i\mbox{ is even,}\\
u''_{i} & \quad\mbox{if }i\mbox{ is odd,}
\end{cases}\quad\mbox{and}\quad b_{i}\mapsto\begin{cases}
v''_{i} & \quad\mbox{if }i\mbox{ is even,}\\
v'_{i} & \quad\mbox{if }i\mbox{ is odd.}
\end{cases}
\end{eqnarray*}
for any $0\leq i<2n$. Since $n$ is odd, $f$ is clearly a bijection
and it is enough to show that $f$ is a homomorphism between $H$
and $G'$. We now check that all edges from $H$ map to edges in $G'$.
First observe that in $H$, edges of types $a_{i}a_{i+1}$ and $a_{i}b_{i}$
map to these in \ref{enu:outer-rim} and \ref{enu:spikes}, respectively.
Indeed, by definition we have
\[
f\left(a_{i}a_{i+1}\right)=\begin{cases}
u'_{i}u''_{i+1} & \quad\mbox{if }i\mbox{ is even,}\\
u''_{i}u'_{i+1} & \quad\mbox{if }i\mbox{ is odd,}
\end{cases}\quad\mbox{and}\quad f\left(a_{i}b_{i}\right)=\begin{cases}
u'_{i}v''_{i} & \quad\mbox{if }i\mbox{ is even,}\\
u''_{i}v'_{i} & \quad\mbox{if }i\mbox{ is odd.}
\end{cases}
\]
Finally, for edges of type $b_{i},b_{i+k'}$, we now observe that
\begin{equation}
f\left(b_{i}b_{i+k'}\right)=\begin{cases}
v''_{i}v'_{i+k'} & \quad\mbox{if }i\mbox{ is even,}\\
v'_{i}v''_{i+k'} & \quad\mbox{if }i\mbox{ is odd}.
\end{cases}\label{eq:inner-edges}
\end{equation}
Indeed, if $k$ is odd or even, we have
\[
b_{i+k}\mapsto\begin{cases}
v''_{i+k} & \quad\mbox{if }i+k\mbox{ is even,}\\
v'_{i+k} & \quad\mbox{if }i+k\mbox{ is odd.}
\end{cases}\quad\mbox{and}\quad b_{i-k+n}\mapsto\begin{cases}
v''_{i+n-k} & \quad\mbox{if }i-k+n\mbox{ is even,}\\
v'_{i+n-k} & \quad\mbox{if }i-k+n\mbox{ is odd,}
\end{cases}
\]
respectively. Keep in mind that all subscripts on the right side are
modulo $n$. By (\ref{eq:inner-edges}) we conclude that edges of
type $b_{i}b_{i+k'}$ correspond to the edges of type \ref{enu:inner-rims}
in $G'$. Since both $G'$ and $H$ are by definition cubic and of
the same cardinality, the isomorphism follows.%
\end{proof}

It remains to describe the behaviorof the rest of Kronecker
involutions, namley the ones under conditions $n\equiv0\pmod4$ and $n\mid \frac{k^{2}-1}{2}$, while  $k< \frac{n}{2}$.
In the next subsection we describe their equivalence (for fixed $n,k$), and also
the  corresponding quotient structure.

\subsection{The rim-switching Kronecker involutions}

Let us now turn to the Kronecker involutions containing permutation $\alpha$,
which are described by
an item \ref{thm1:vt} in Theorem~\ref{thm1}, so we assume that
$k^2\equiv 1 \pmod n$ and $Q=\frac{k^2-1}{n}$ is even.

In such case, using  Propositions~\ref{prop:sufficient} and
\ref{_prop:sufficient}
one may find
a Kronecker involution of $\gp{n,k}$, depending on
whether $k\equiv 1 \pmod 4$ or $k\equiv 3 \pmod 4$, respectively. In order to prove that several instances of Kronecker involutions are equivalent, we will need the following extension of the LCF notion.

\begin{defn}
For an involution $g$ without fixed points of
type $[n]\rightarrow [n]$, we define
$f(i)=g(i)-i$ and write, for short, $[f]$ instead
of $[f(0),f(1),\dots,f(n-1)]$.
\end{defn}
It is easy to see that both graphs $C^+(n,k)$ and $C^-(n,k)$ from Definition~\ref{def:cycle+matching} correspond to $[\frac{n}{2}+(k-1)x]$ and $[\frac{n}{2}-(k+1)x]$, respectively.
In order to complete the proof of the main theorem,
it remains to show that for all
possible Kronecker involutions, the corresponding quotient is unique.
We split the further analysis into two cases,
depending the value of $k \pmod 4$.

\subsubsection*{Case 1: $k\equiv 1\pmod 4$}

In this case, any odd $s$ defines $a=sa_{\min}$ and
subseqently a Kronecker
involution of form $\omega_a=\alpha^a\gamma$,
with $\Omega_a(i)=a+ki$.
By Definition~\ref{def:cycle+matching} and
Theorem~\ref{prop:involutions} the corresponding quotient
graph $G'$ is isomorphic to an outer-rim, augmented by
a matching edges of type $i\sim\Omega^{-1}_a(i)$,
which implies $G'\simeq [f_a]$, where
$$f_a(i)=\Omega^{-1}_a(i)-i=ik+a-i.$$

To show that for any odd $s$, all instances of corresponding
Kronecker involutions are equivalent,
we first prove the following lemma.

\begin{lem}\label{lem:equivalent-involutions}
Let $a'=a+\gcd(k-1,Q)\cdot a_{\min}$.
Then $[f_{a}] \simeq [f_{a'}]$.
\end{lem}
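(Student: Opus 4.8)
The plan is to exhibit an explicit graph isomorphism between $[f_a]$ and $[f_{a'}]$ as a rotation of their common Hamiltonian cycle. First I would unwind what the hypothesis on $a'$ really says. Starting from $a' = a + \gcd(k-1,Q)\cdot a_{\min}$, I would substitute the alternative expression $a_{\min} = \frac{k-1}{\gcd(k-1,Q)}$ established in \eqref{eq:alt-notion-amin}; the two $\gcd$ factors cancel and the seemingly elaborate shift collapses to the single relation
\[
a' = a + (k-1).
\]
This reduction is the conceptual heart of the argument: it turns an arbitrary ``$\gcd$-step'' into one unit step of the underlying affine map $\Omega_a(i)=ki+a$.

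Next I would recall that, by construction, both $[f_a]$ and $[f_{a'}]$ have vertex set $\mathbb{Z}_n$, share the outer cycle with edges $\{i,i+1\}$, and carry a perfect matching joining $i$ to $\Omega_a(i)=ki+a$ (respectively $\Omega_{a'}(i)=ki+a'$), where $\Omega_a$ and $\Omega_{a'}$ are fixed-point-free involutions because $\omega_a$ and $\omega_{a'}$ are Kronecker involutions. A cycle-preserving isomorphism between two such graphs is precisely a rotation $\phi(i)=i+c$ that conjugates one matching involution to the other, that is $\phi\circ\Omega_a\circ\phi^{-1}=\Omega_{a'}$. Computing $\phi\circ\Omega_a\circ\phi^{-1}(i)=ki+a+c(1-k)$ and comparing with $\Omega_{a'}(i)=ki+a+(k-1)$ forces $c(1-k)\equiv k-1 \pmod n$, so the natural choice is $c=-1$.

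Finally I would take $\phi(i)=i-1$ and verify directly that it is an isomorphism: it clearly sends cycle edges $\{i,i+1\}$ to cycle edges, and for the matching one checks $\Omega_{a'}(i-1)=k(i-1)+a+(k-1)=ki+a-1=\Omega_a(i)-1$, so the matching edge $\{i,\Omega_a(i)\}$ of $[f_a]$ is carried to the matching edge $\{i-1,\Omega_{a'}(i-1)\}$ of $[f_{a'}]$. Hence $\phi$ is a bijection taking edges to edges, giving $[f_a]\simeq[f_{a'}]$. I expect no serious obstacle once the reduction $a'=a+(k-1)$ is in hand; the only point requiring care is that the matchings are genuinely defined by involutions, so that checking $\phi$ on a single endpoint of each matching edge suffices, which is guaranteed by Proposition~\ref{prop:sufficient}.
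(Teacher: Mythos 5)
Your proof is correct and follows essentially the same route as the paper: both arguments hinge on the identity $\gcd(k-1,Q)\cdot a_{\min}=k-1$ from (\ref{eq:alt-notion-amin}), after which the paper observes that $f_{a'}(i)=f_a(i+1)$, i.e.\ the LCF sequences differ by a cyclic shift, which is exactly the rotation $\phi(i)=i-1$ you exhibit explicitly. The only difference is presentational: you spell out the conjugation of the matching involutions, whereas the paper leaves this implicit in the standard fact that cyclically shifted LCF codes describe isomorphic graphs.
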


\begin{proof}
To prove the claim it is enough to observe that
the LCF sequence of graph $[f_{a'}]$ is equivalent to the LCF sequence
of $[f_{a}]$, cyclically shifted by one, i.e. $f_{a'}(i)=f_a(i+1).$
Indeed, by definition we have
\begin{align*}
f_{a'}(i)&=a+\gcd(k-1,Q)a_{\min}+i(k-1)\\
&= a+(i+1)(k-1)=f_a(i+1),
\end{align*}
where the second line follows from (\ref{eq:alt-notion-amin}).

\end{proof}

We are now ready to show the item \ref{thm1:b1} of Theorem~\ref{thm1}.

\begin{prop}
Let $k^2\equiv 1\pmod n$ with $n\mid \frac{k^2-1}{2}$ and
$k\equiv 1\pmod 4$. Then $\gp{n,k}$ have unique quotient
$[f]$.
\end{prop}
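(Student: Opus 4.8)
The goal is to show that for fixed $n,k$ with $k\equiv 1\pmod 4$ and $n\mid\frac{k^2-1}{2}$, every Kronecker involution of type $\omega_a=\alpha^a\gamma$ yields the same quotient graph, so that the quotient is well-defined (and equals $C^+(n,k)$). By Lemma~\ref{nec-conditions} and Proposition~\ref{prop:sufficient} the admissible Kronecker involutions are exactly $\omega_a$ for $a=s\,a_{\min}$ with $s$ odd, and by the earlier analysis each such involution produces the quotient $[f_a]$ where $f_a(i)=ik+a-i$. So the whole statement reduces to proving that $[f_a]\simeq[f_{a'}]$ for any two odd multiples $s,s'$ of $a_{\min}$.

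The plan is to use Lemma~\ref{lem:equivalent-involutions} as the engine. That lemma shows that adding $\gcd(k-1,Q)\cdot a_{\min}$ to the parameter $a$ produces an LCF sequence that is just a cyclic shift by one, hence an isomorphic graph. First I would observe that a single cyclic shift is a graph isomorphism of the LCF-presented graph, so $[f_a]\simeq[f_{a+\gcd(k-1,Q)a_{\min}}]$; iterating, $[f_a]\simeq[f_{a+m\cdot\gcd(k-1,Q)a_{\min}}]$ for every integer $m$. Next I would translate the freedom in $s$ into this language: two admissible values $a=s\,a_{\min}$ and $a'=s'\,a_{\min}$ differ by $(s'-s)a_{\min}$, and since $s,s'$ are both odd their difference $s'-s$ is even. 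The crux is therefore to show that any even multiple of $a_{\min}$ is reachable by repeatedly adding $\gcd(k-1,Q)\cdot a_{\min}$, working modulo the period of the LCF sequence — that is, to show that as $a$ ranges over $s\,a_{\min}$ with $s$ odd, the resulting quotients all coincide.

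The main obstacle, and the heart of the argument, is the arithmetic bookkeeping modulo $n$. The parameter $a$ is only meaningful modulo $n$ (all subscripts are mod $n$), and $a_{\min}=\frac{n}{\gcd(n,k+1)}$ divides $n$, so the distinct admissible values of $a$ form a coset structure inside $\mathbb{Z}_n$. I would make precise that the number of distinct odd-$s$ values of $a$ modulo $n$ is $\gcd(n,k+1)/2$ (using that $\gcd(n,k+1)\equiv 2\pmod 4$, established in Proposition~\ref{prop:sufficient}), and that the shift-by-$\gcd(k-1,Q)a_{\min}$ operation from Lemma~\ref{lem:equivalent-involutions} cyclically permutes exactly these values. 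Concretely, I expect to verify that $\gcd(k-1,Q)$ is coprime to $\gcd(n,k+1)/2=a_{\min}/\gcd(\dots)$-type index, so that repeatedly adding $\gcd(k-1,Q)$ (in the relevant quotient ring) hits every admissible residue; this is where the evenness of $Q$ and the congruence $k\equiv1\pmod4$ do the real work. Once this transitivity is established, all admissible $\omega_a$ give LCF sequences that are cyclic shifts of one another, hence isomorphic graphs, and the unique quotient is $[f]=C^+(n,k)$, completing item~\ref{thm1:b1} of Theorem~\ref{thm1}.
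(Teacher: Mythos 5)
Your plan is essentially the paper's proof: it also treats the admissible involutions as an additive group of order $\frac{\gcd(n,k+1)}{2}$ on which Lemma~\ref{lem:equivalent-involutions} acts by repeatedly adding $\frac{\gcd(Q,k-1)}{2}$, and shows this action is transitive. The coprimality you leave as ``expect to verify'' is exactly the paper's closing computation, namely that
$\gcd\left(\frac{\gcd(n,k+1)}{2},\frac{\gcd(Q,k-1)}{2}\right)$ divides $\gcd\left(\frac{k+1}{2},\frac{k-1}{2}\right)=1$, the last equality holding because $\frac{k+1}{2}$ and $\frac{k-1}{2}$ are consecutive integers.
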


\begin{proof}[Proof of \ref{thm1:b1} of Theorem~\ref{thm1}]
Let $S$ be the set of all $\frac{\gcd(n,k+1)}{2}$
Kronecker involutions.
The Lemma~\ref{lem:equivalent-involutions} partitions $S$
into equivalence classes with respect to the relation of
having an isomorphic corresponding quotients. We will show
that $S$ consists of only one such equivalence class.

In other words, this is equivalent to being in an additive group of order $\frac{\gcd(n,k+1)}{2}$ and calculating an order of element $\frac{\gcd(Q,k-1)}{2}$. Clearly, all classes of such partition of $S$ are of the same cardinality, while the number of these classes is equal to

\begin{align*}
\gcd\left (\frac{\gcd(n,k+1)}{2}, \frac{\gcd(Q,k-1)}{2}\right )&=
\gcd\left (\frac{n}{2},\frac{k+1}{2},\frac{k-1}{2},\frac{Q}{2}\right )\\
&\leq \gcd\left (\frac{k+1}{2},\frac{k-1}{2}\right )=1.
\end{align*}
But then any Kronecker involution, in particular the one with $a=\frac{n}{2}$, corresponds to the unique quotient of $\gp{n,k}$.
\end{proof}

\subsubsection*{Case 2: $k\equiv 3\pmod 4$}

Also in this case, any odd $s$ defines $a=sa'_{\min}$ and
subseqently a Kronecker
involution of form $\omega'_a=\alpha^a\beta\gamma$,
with $\Omega'_a(i)=a-ki$.
By Definition~\ref{def:cycle+matching} and
Theorem~\ref{prop:involutions} the corresponding quotient
graph $G'$ is isomorphic to an outer-rim, augmented by
a matching edges of type $i\sim\Omega^{-1}_a(i)$,
which implies $G'\simeq [f_a]$, where
$$f_a(i)=\Omega'^{-1}_a(i)-i=a-ik-i.$$
We introduce similar lemma as in previous case.

\begin{lem}\label{_lem:equivalent-involutions}
Let $a'=a+\gcd(k+1,k+1-Q)\cdot a'_{\min}$.
Then $[f_{a}] \simeq [f_{a'}]$.
\end{lem}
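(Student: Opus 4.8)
The plan is to mirror exactly the proof of Lemma~\ref{lem:equivalent-involutions} from Case~1, exploiting the structural parallelism the authors have already set up between the two cases. Just as before, the claim I want to establish is that the LCF sequence of $[f_{a'}]$ is nothing but the LCF sequence of $[f_a]$ shifted cyclically by one position, i.e. that $f_{a'}(i)=f_a(i+1)$ for every $i$. Since two such LCF descriptions that differ only by a cyclic shift of the starting vertex describe isomorphic graphs (one merely relabels the Hamiltonian cycle by rotating the origin), this identity suffices to conclude $[f_a]\simeq[f_{a'}]$.

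First I would recall that in this case $f_a(i)=a-ik-i=a-i(k+1)$, so the defining quantity is governed by the coefficient $(k+1)$ rather than $(k-1)$. Then I would substitute the definition $a'=a+\gcd(k+1,k+1-Q)\cdot a'_{\min}$ into $f_{a'}(i)$ and compute
\begin{align*}
f_{a'}(i)&=a+\gcd(k+1,k+1-Q)\,a'_{\min}-i(k+1).
\end{align*}
The key algebraic input is the alternative expression for $a'_{\min}$ derived in~(\ref{_eq:alt-notion-amin}), namely $a'_{\min}=\dfrac{k+1}{\gcd(k+1,k+1-Q)}$, which gives $\gcd(k+1,k+1-Q)\cdot a'_{\min}=k+1$ (working modulo $n$, as all subscripts are). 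Hence $f_{a'}(i)=a+(k+1)-i(k+1)=a-(i+1)(k+1)=f_a(i+1)$, as desired.

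The step that requires the most care — and the one I would flag as the main obstacle — is verifying that the product $\gcd(k+1,k+1-Q)\cdot a'_{\min}$ really reduces to $k+1$ and not merely to $k+1$ up to an inconvenient multiple of $n$ that would break the shift identity. This hinges on correctly importing~(\ref{_eq:alt-notion-amin}), whose derivation uses $(k+1)(k-1)=Qn$; I would confirm that the reduction is an honest equality (not just a congruence that could introduce spurious $n$-multiples affecting the shift). Once this is pinned down, the rest is the same one-line bookkeeping as in Lemma~\ref{lem:equivalent-involutions}, so I would present the proof compactly by reusing the earlier structure verbatim with $(k+1)$ in place of $(k-1)$.
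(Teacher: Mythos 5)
Your proposal follows the paper's argument exactly: substitute the definition of $a'$, use~(\ref{_eq:alt-notion-amin}) to reduce $\gcd(k+1,k+1-Q)\cdot a'_{\min}$ to $k+1$, and read off a cyclic shift of the LCF sequence (and your worry about the reduction holding only up to multiples of $n$ is harmless, since the LCF entries are taken modulo $n$ anyway). The only slip is the last sign: $a+(k+1)-i(k+1)=a-(i-1)(k+1)=f_a(i-1)$, so the shift is by $-1$ rather than $+1$, which is immaterial for the isomorphism and matches the paper's stated identity $f_{a'}(i)=f_a(i-1)$.
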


\begin{proof}
We similarly prove the claim by observing $f_{a'}(i)=f_a(i-1)$.
Indeed, we have
\begin{align*}
f_{a'}(i)&=a+\gcd(k+1,k+1-Q)a'_{\min}-i(k+1)\\
&= a-(i-1)(k+1)=f_a(i-1),
\end{align*}
where the second line follows from (\ref{_eq:alt-notion-amin}).

\end{proof}

We are now ready to show the item \ref{thm1:b2} of Theorem~\ref{thm1}.

\begin{prop}
Let $k^2\equiv 1\pmod n$ with $n\mid \frac{k^2-1}{2}$ and
$k\equiv -1\pmod 4$. Then $\gp{n,k}$ have unique quotient
$C^-(n,k)$.
\end{prop}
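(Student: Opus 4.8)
The plan is to mirror exactly the structure used in Case~1 (the proof of item~\ref{thm1:b1}), since the author has deliberately set up the $k\equiv 3\pmod 4$ case as a ``compact transcript'' of the $k\equiv 1\pmod 4$ case. First I would let $S$ be the set of all $\frac{\gcd(n,n-k+1)}{2}$ Kronecker involutions of type $\omega'_a=\alpha^a\beta\gamma$, as enumerated by Proposition~\ref{_prop:sufficient} (one for each odd $s$ with $a=sa'_{\min}$). Lemma~\ref{_lem:equivalent-involutions} shows that stepping $a$ by $\gcd(k+1,k+1-Q)\cdot a'_{\min}$ produces an isomorphic quotient (via a cyclic shift of the LCF-sequence $[f_a]$), so this lemma partitions $S$ into equivalence classes of isomorphic quotients. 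The goal is to show there is exactly one such class.

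The heart of the argument is the same order-counting computation as in Case~1. Interpreting the situation inside an additive cyclic group of order $\frac{\gcd(n,n-k+1)}{2}$, the number of equivalence classes equals the order of the step element, which computes to
\[
\gcd\left(\frac{\gcd(n,n-k+1)}{2},\ \frac{\gcd(k+1,k+1-Q)}{2}\right).
\]
I would simplify this using the identity $\gcd(a,b)=\gcd(a,b,\dots)$ to pull everything into a single nested $\gcd$ and then bound it above by $\gcd$ of two coprime-up-to-a-factor quantities, exactly as the displayed computation in the proof of \ref{thm1:b1} bounds the analogous expression by $\gcd\!\left(\frac{k+1}{2},\frac{k-1}{2}\right)=1$. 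The analogous reduction here should funnel the expression down to a $\gcd$ involving $\frac{k-1}{2}$ and $\frac{k+1}{2}$ (or a divisor thereof), whose value is $1$ since consecutive halves of $k\pm 1$ are coprime.

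Once the number of classes is shown to be $1$, every Kronecker involution of type $\omega'_a$ yields the same quotient up to isomorphism, so I may take the canonical representative $a=n/2$ (guaranteed to be an admissible Kronecker involution by the Corollary at the end of Section~\ref{sec:id_of_involut}). Finally I would identify this single quotient with $C^-(n,k)$: by the remark following Definition~\ref{def:cycle+matching}, $C^-(n,k)$ corresponds to the LCF data $[\tfrac{n}{2}-(k+1)x]$, and the quotient under $\omega'_{n/2}$ is precisely $[f_{n/2}]$ with $f_{n/2}(i)=\tfrac{n}{2}-i(k+1)$, matching $C^-(n,k)$ on the nose.

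The main obstacle I anticipate is the number-theoretic simplification of the nested $\gcd$. In Case~1 the terms $\gcd(n,k+1)$ and $\gcd(Q,k-1)$ reduced cleanly because $Q=\frac{(k+1)(k-1)}{n}$ sits symmetrically between $k-1$ and $k+1$; here the relevant quantities are $\gcd(n,n-k+1)$ and $\gcd(k+1,k+1-Q)$, and one must use $n-k+1\equiv -(k-1)\pmod{\text{common factors}}$ together with the relation $(k+1)(k-1)=Qn$ to force the collapse to a $\gcd$ of two consecutive-type halves equal to $1$. Verifying that these substitutions are legitimate modulo the right divisors (and that $k\equiv 3\pmod 4$ correctly guarantees the relevant factor of $2$ so that the ``halving'' is valid) is the delicate bookkeeping step, but it is entirely parallel to what was already carried out, so no genuinely new idea should be required.
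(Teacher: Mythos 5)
Your proposal follows essentially the same route as the paper: partition the set of Kronecker involutions $\omega'_a$ into classes via Lemma~\ref{_lem:equivalent-involutions}, count the classes as $\gcd\bigl(\tfrac{\gcd(n,n-k+1)}{2},\tfrac{\gcd(k+1,k+1-Q)}{2}\bigr)$, collapse this to $\gcd\bigl(\tfrac{k+1}{2},\tfrac{k-1}{2}\bigr)=1$, and read off the quotient as $C^-(n,k)$ from the LCF data $[f_{n/2}]$. This matches the paper's argument (and in fact states the cardinality of $S$ more carefully than the paper, which writes $\tfrac{\gcd(n,k+1)}{2}$ where $\tfrac{\gcd(n,n-k+1)}{2}$ is meant).
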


\begin{proof}[Proof of \ref{thm1:b2} of Theorem~\ref{thm1}]
Let $S$ be the set of all $\frac{\gcd(n,k+1)}{2}$
Kronecker involutions.
Again, we show that the Lemma~\ref{_lem:equivalent-involutions} eventually covers the whole set $S$.

In this case one may consider an additive group of order $\frac{\gcd(n,n-k+1)}{2}$ and calculate an order of element $\frac{\gcd(k+1,k+1-Q)}{2}$. The number of
such orbits is equal to

\begin{align*}
\gcd\left (\frac{\gcd(n,n-k+1)}{2}, \frac{\gcd(k+1,k+1-Q)}{2}\right )&=
\gcd\left (\frac{n}{2},\frac{k+1}{2},\frac{k-1}{2},\frac{Q}{2}\right )\\
&\leq \gcd\left (\frac{k+1}{2},\frac{k-1}{2}\right )=1.
\end{align*}

\end{proof}
Since we described the quotients of all existing Kronecker involutions,
this concludes the proof of Theorem~\ref{thm1}.

\section{Concluding remarks and future work}

In this paper, we classified parameters $(n,k)$ such that $\gp{n,k}$ is a Kronecker cover of some graph, and described the corresponding quotients.
From the Main Theorem it easily follows:
\begin{cor}
$\kc{\gp{n,k}}$ is itself a generalized Petersen graph if and only if $n$ is odd.
\end{cor}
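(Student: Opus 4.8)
The plan is to read the statement as a question about which generalized Petersen graphs occur as quotients, and to play the two directions off against Proposition~\ref{thm:main} and the full classification in Theorem~\ref{thm1}.

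The direction ``$n$ odd $\Rightarrow\kc{\gp{n,k}}$ is a generalized Petersen graph'' is immediate: Proposition~\ref{thm:main} already identifies $\kc{\gp{n,k}}$ with $\gp{2n,k}$ when $k$ is odd and with $\gp{2n,n-k}$ when $k$ is even, both of which are members of the family. So all the work lies in the converse, which I would prove contrapositively: assuming $n$ is even, I show that $\kc{\gp{n,k}}$ is never a generalized Petersen graph. First I dispose of the bipartite case. If $n$ is even and $k$ is odd, then $\gp{n,k}$ is bipartite, and the canonical double cover of a connected bipartite graph is disconnected (it splits into two copies of $\gp{n,k}$); since every generalized Petersen graph is connected, $\kc{\gp{n,k}}$ cannot be one. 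Equivalently, whenever $\kc{\gp{n,k}}$ is a connected generalized Petersen graph, $\gp{n,k}$ must be non-bipartite, so for even $n$ we may assume $k$ is even.

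It then remains to treat $n$ even and $k$ even, which I would feed into Theorem~\ref{thm1}. Suppose $\kc{\gp{n,k}}\simeq\gp{N,K}$; comparing orders forces $N=2n$, so $\gp{2n,K}$ is a generalized Petersen graph that is a Kronecker cover, and its quotient is $\gp{n,k}$. Since $n$ is even we have $2n\equiv0\pmod4$, which excludes case~\ref{thm1:non-vt} of Theorem~\ref{thm1} (it requires index $\equiv 2\pmod 4$) and also the exceptional graph $\gp{10,3}$ (whose index $10$ forces the odd value $n=5$). Hence we land in case~\ref{thm1:vt}, where the quotient is $C^{+}(2n,K)$ or $C^{-}(2n,K)$. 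Since this quotient must coincide with $\gp{n,k}$, one would need $\gp{n,k}\simeq C^{\pm}(2n,K)$, and the proof is finished once this is ruled out.

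The main obstacle, therefore, is the structural fact that the case-\ref{thm1:vt} quotients $C^{\pm}(2n,K)$ are never generalized Petersen graphs (and likewise that the second quotient $\XYX$ of $\gp{10,3}$ is not, although that graph already carries odd index and so does not threaten the statement). I expect to settle this with a cycle-structure invariant: by construction $C^{\pm}(2n,K)$ is a single Hamiltonian $2n$-cycle together with a perfect matching of chords, so deleting its matching leaves a connected $2n$-cycle, whereas deleting the spokes of a generalized Petersen graph $\gp{n,k}$ on $2n$ vertices disconnects it into an outer $n$-cycle and the inner rim. Turning this qualitative contrast into a genuine isomorphism obstruction -- rather than a statement about one distinguished matching, since a priori $C^{\pm}(2n,K)$ might admit some other perfect matching of the required GP type -- is the delicate point, and is exactly what the explicit description of the quotients in Section~\ref{sec:quotients} is designed to supply. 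Combining the two directions then yields the equivalence.
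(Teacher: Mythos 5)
Your overall architecture is sound and, as far as one can tell, matches what the authors must have had in mind (the paper offers no proof at all, only the remark that the corollary ``easily follows'' from the main theorem): the $n$ odd direction is indeed immediate from Proposition~\ref{thm:main}, the case $n$ even, $k$ odd is correctly killed by bipartiteness (disconnected cover), and the reduction of the case $n$ even, $k$ even to the claim $\gp{n,k}\not\simeq C^{\pm}(2n,K)$ -- via $N=2n$, the exclusion of case \ref{thm1:non-vt} and of $\gp{10,3}$, and the uniqueness of the quotient -- is exactly right.

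The problem is that you stop precisely at the step that carries all the remaining content: you never actually prove that $C^{\pm}(2n,K)$ is not a generalized Petersen graph, and you yourself concede that your proposed obstruction does not work as stated. Deleting ``the'' matching of $C^{\pm}(2n,K)$ leaving a connected $2n$-cycle, versus deleting ``the'' spokes of $\gp{n,k}$ disconnecting it, compares two distinguished edge sets, not two isomorphism invariants; an abstract isomorphism $\gp{n,k}\simeq C^{\pm}(2n,K)$ has no obligation to match the LCF matching with the spokes, and most generalized Petersen graphs are Hamiltonian, hence also decompose as a Hamiltonian cycle plus a perfect matching. Nothing in Section~\ref{sec:quotients} supplies the missing obstruction either: that section only identifies the quotients as $C^{\pm}(2n,K)$, which is the point you start from. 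To close the gap you would need a genuine invariant separating $C^{\pm}(2n,K)$ from every $\gp{n,k}$ with $n,k$ even (for instance, an argument exploiting the essential uniqueness of the spoke matching in non--edge-transitive generalized Petersen graphs, so that the complementary $2$-factor of $\gp{2n,K)$ -- two $2n$-cycles, since $\gcd(2n,K)=1$ -- could be compared with the lift of the outer rim of $\gp{n,k}$, which splits into two $n$-cycles because $n$ is even; the finitely many edge-transitive cases would then be checked by hand). As it stands, the proposal is an honest and correct reduction, but not a proof.
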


We analyzed the problem of Kronecker covers of a family of generalized
Petersen graphs. It would be interesting to transfer this problem to the family of $I$-graphs
\citep{boben2005graphs,bouwer1988foster,petkovvsek2009enumeration,horvat2012isomorphism}
or Rose-Window graphs \citep{wilson2008rose},  or some other
families of cubic or quartic graphs.

Graphs $\kc{\gp{n,k}}$ that are not generalized Petersen graphs, in other words if $n$ is odd, fall into
two known classes, depending on the parity of $k$.
If $k$ is odd, we have $\kc{\gp{n,k}} = 2\gp{n,k}$.
It would be interesting to investigate the family of graphs $\kc{\gp{n,k}}$
with both $n$ and $k$ even. The smallest case
is depicted in Figure~\ref{fig:durer}. This
is the Kronecker cover of D\"{u}rer graph $\gp{6,2}$.

\begin{figure}
\centering
\includegraphics[width=7cm]{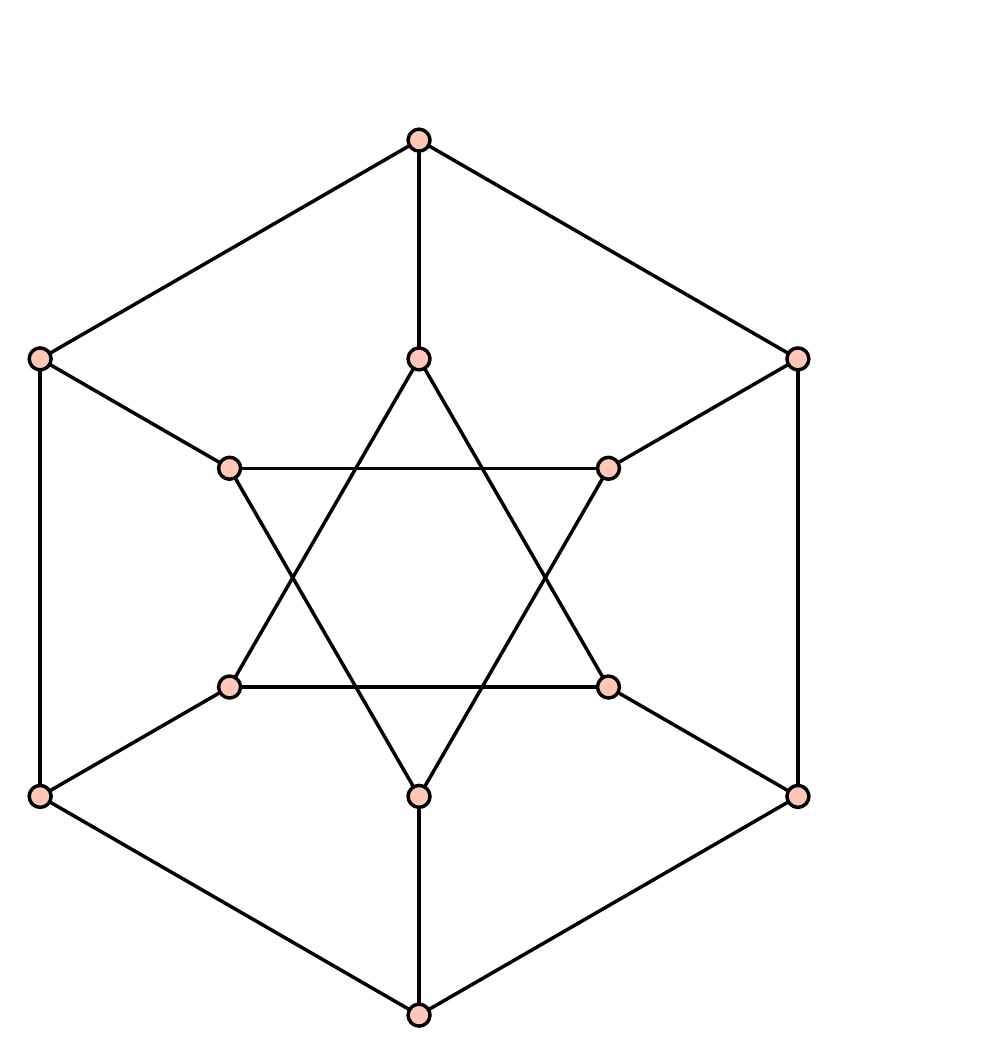}
\quad
\quad
\includegraphics[width=7cm]{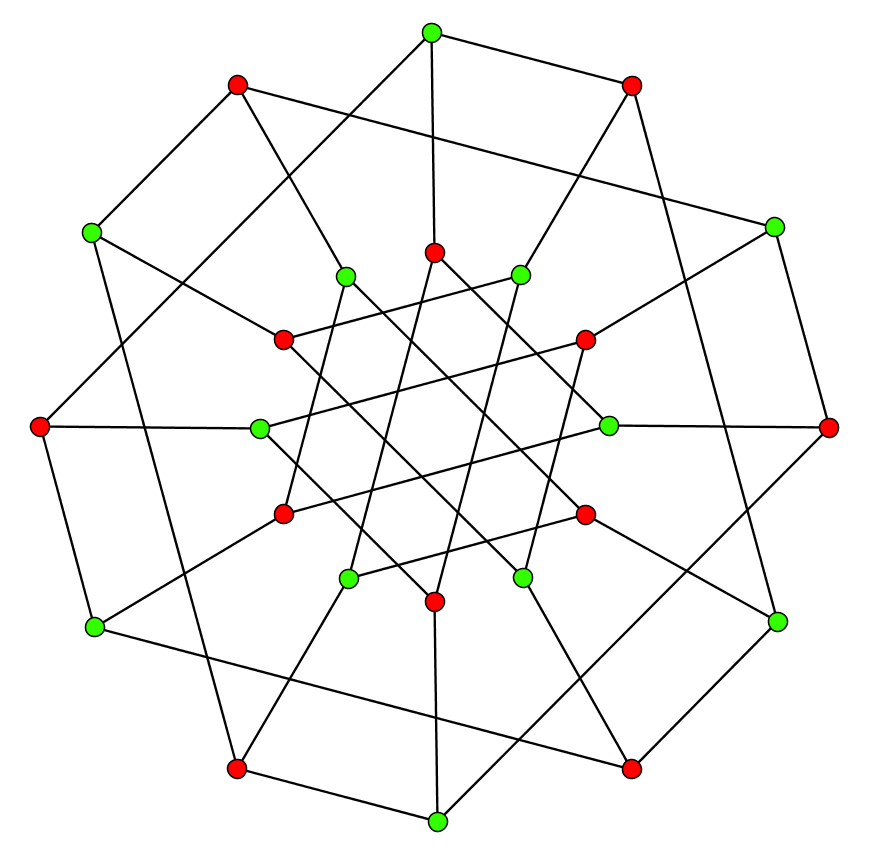}
\caption{\label{fig:durer}The D\"{u}rer graph $\gp{6,2}$ and its Kronecker cover $KC(\gp{6,2})$ with proper vertex two-coloring.}
\end{figure}

\section*{Acknowledgements}
We would like to thank A. Hujdurovi\' c and A. \v Zitnik for helpful comments and discussions and G\'abor G\'evay for the drawing of $KC(\gp{6,2})$.
The second author gratefully acknowledges financial support of the Slovenian Research Agency, ARRS, research program no. P1-0294.

\bibliographystyle{agsm}
\bibliography{kronecker}

\end{document}